\theoremstyle{plain}
\newtheorem{theorem}{Theorem}[section]
\newtheorem{lemma}[theorem]{Lemma}
\newtheorem{corollary}[theorem]{Corollary}
\newtheorem{claim}[theorem]{Claim}
\theoremstyle{definition}
\theoremstyle{remark}
\newcommand{\opnorm}{\@ifstar\@opnorms\@opnorm}
\newcommand{\@opnorms}[1]{%
  \left|\mkern-1.5mu\left|\mkern-1.5mu\left|
   #1
  \right|\mkern-1.5mu\right|\mkern-1.5mu\right|
}
\newcommand{\@opnorm}[2][]{%
  \mathopen{#1|\mkern-1.5mu#1|\mkern-1.5mu#1|}
  #2
  \mathclose{#1|\mkern-1.5mu#1|\mkern-1.5mu#1|}
}
\begin{document}

\title[Non-universal families]{Non-universal families of separable Banach spaces}
\author{Ond\v{r}ej Kurka}
\thanks{The research was supported by the grant GA\v{C}R 14-04892P. The author is a junior researcher in the University Centre for Mathematical Modelling, Applied Analysis and Computational Mathematics (MathMAC). The author is a member of the Ne\v{c}as Center for Mathematical Modeling.}
\address{Department of Mathematical Analysis, Charles University, Soko\-lovsk\'a 83, 186 75 Prague 8, Czech Republic}
\email{kurka.ondrej@seznam.cz}
\keywords{Isometrically universal Banach space, Effros-Borel structure, Analytic set, Monotone basis, Strict convexity}
\subjclass[2010]{Primary 46B04, 54H05; Secondary 46B15, 46B20, 46B25}
\begin{abstract}
We prove that if $ \mathcal{C} $ is a family of separable Banach spaces which is analytic with respect to the Effros-Borel structure and none $ X \in \mathcal{C} $ is isometrically universal for all separable Banach spaces, then there exists a separable Banach space with a monotone Schauder basis which is isometrically universal for $ \mathcal{C} $ but still not for all separable Banach spaces. We also establish an analogous result for the class of strictly convex spaces.
\end{abstract}
\maketitle

\section{Introduction and main results}

Let $ \mathcal{C} $ be a class of Banach spaces. We say that a Banach space $ X $ is \emph{isomorphically (isometrically) universal for $ \mathcal{C} $} if it contains an isomorphic (isometric) copy of every member of $ \mathcal{C} $.

The present paper, as well as two author's recent papers \cite{kurka1, kurka2}, establishes isometric counterparts of results concerning universality questions in separable Banach space theory and their natural connection with descriptive set theory (see \cite{bourgain, argyros, bossard1, bossard2, argyrosdodos, dodosferenczi, dodos, dodosquot, godkal, kurka}). These three papers together give a solution of a problem posed by G.~Godefroy \cite{godefroyprobl} if there exists any isometric version of the amalgamation theory of S.~A.~Argyros and P.~Dodos \cite{argyrosdodos} which would provide isometrically universal spaces for small, or regular, isometric classes of Banach spaces.

For a class $ \mathcal{C} $ of separable Banach spaces, it is a natural question whether $ \mathcal{C} $ is \textquotedblleft generic\textquotedblright {} in the sense that every separable Banach space which is isomorphically (isometrically) universal for $ \mathcal{C} $ is also isomorphically (isometrically) universal for all separable Banach spaces.

Employing methods from descriptive set theory, J.~Bourgain \cite{bourgain} strengthened a well-known result of W.~Szlenk \cite{szlenk} and proved that the answer is positive for the class of separable reflexive spaces (in the isomorphic setting). The result was revisited by B.~Bossard \cite{bossard2} who proved that any analytic set of separable Banach spaces (defined below) that contains every separable reflexive space up to isomorphism must also contain an element which is isomorphically universal for all separable Banach spaces.

These two results motivated the authors of \cite{argyrosdodos} to introduce two corresponding concepts of the Bourgain genericity and the Bossard genericity. It is easy to show that a Bossard generic class is Bourgain generic, because the set of all spaces which can be embedded isomorphically into a separable Banach space $ X $ is analytic. The opposite implication, conjectured in \cite{argyrosdodos}, was proved only for classes of spaces with a basis.

To drop the reliance on basis, P.~Dodos \cite{dodos} developed a parameterized version of a construction of $ \mathcal{L}_{\infty} $-spaces due to J.~Bourgain and G.~Pisier \cite{bourgainpisier}. This enabled to prove the equivalence between the Bourgain genericity and the Bossard genericity at last.

In the present work, we find an isometric counterpart of a result from \cite{dodos}. We prove the following theorem.

\begin{theorem} \label{thmmain1}
Let $ \mathcal{C} $ be an analytic set of Banach spaces none of which is isometrically universal for all separable Banach spaces. Then there exists a Banach space $ E $ with a monotone basis which is isometrically universal for $ \mathcal{C} $ but still not for all separable Banach spaces.
\end{theorem}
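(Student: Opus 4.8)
The proof combines a boundedness argument from descriptive set theory with an amalgamation construction for classes of spaces carrying a monotone basis. We may assume $\mathcal C\neq\emptyset$, and we work inside the standard Borel space $\mathcal{SB}$ of closed subspaces of a fixed isometrically universal space, say $C[0,1]$.

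\emph{Step 1: a coanalytic rank measuring non-universality.} By the Banach--Mazur theorem a separable Banach space is isometrically universal if and only if it contains an isometric copy of $C[0,1]$; since isometric embeddability is an analytic relation on $\mathcal{SB}\times\mathcal{SB}$, the set $\mathcal U$ of isometrically universal spaces is analytic, so $\mathcal{SB}\setminus\mathcal U$ is coanalytic and, by hypothesis, contains $\mathcal C$. I would attach to each $Y\in\mathcal{SB}$, coded by a dense sequence of its unit ball, a tree $T_Y$ whose nodes code finite configurations in $Y$ that approximate, up to a prescribed tolerance depending on the level, the finite-dimensional structure of $C[0,1]$; a perturbation and stabilisation argument shows that $Y$ is isometrically universal precisely when $T_Y$ is ill-founded. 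Thus $Y\mapsto T_Y$ is Borel, the set $\mathcal{SB}\setminus\mathcal U$ consists of those $Y$ for which $T_Y$ is well-founded, and $Y\mapsto\mathrm{rk}(T_Y)$ is a coanalytic rank on the non-universal spaces. Since $\mathcal C$ is an analytic subset of this coanalytic set, the boundedness theorem for such ranks yields a countable ordinal $\xi_0$ with $\mathrm{rk}(T_X)\le\xi_0$ for every $X\in\mathcal C$.

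\emph{Step 2: reduction to spaces with a monotone basis.} Every separable Banach space $X$ embeds isometrically into a space $\mathbf Z(X)$ possessing a monotone Schauder basis, and this can be arranged so that $X\mapsto\mathbf Z(X)$ is Borel and $\mathrm{rk}(T_{\mathbf Z(X)})$ is bounded by a fixed function of $\mathrm{rk}(T_X)$; in particular $\mathbf Z(X)$ is non-universal whenever $X$ is, with controlled rank. This is the isometric counterpart of the use, in Dodos's work, of the Bourgain--Pisier $\mathcal L_\infty$-construction to dispose of the basis hypothesis. Replacing $\mathcal C$ by the analytic family of the spaces $\mathbf Z(X)$, $X\in\mathcal C$ --- which still has $T$-rank bounded by some $\xi_1<\omega_1$, and for which a universal space is a fortiori universal for $\mathcal C$ --- we may from now on assume that every member of $\mathcal C$ carries a monotone basis.

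\emph{Step 3: the rank-bounded amalgamation.} Fix a Borel parametrisation of $\mathcal C$ by the Baire space $\omega^\omega$ and build a Banach space $E$ with a monotone Schauder basis into which every parametrised space embeds isometrically. This is the isometric analogue of the Argyros--Dodos amalgamation: one glues the monotone bases of the members of $\mathcal C$ along a tree of finite-dimensional building blocks, taking the norm on the amalgam to be a supremum over the admissible local views and ensuring that each partial-sum projection has norm one. By construction $E$ is isometrically universal for $\mathcal C$; the essential point is that the combinatorial tree driving the amalgamation, and hence $T_E$, has rank bounded in terms of $\xi_1$, so that $T_E$ is well-founded and $E$ is \emph{not} isometrically universal. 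I expect this step to be the main obstacle: the amalgamation must be performed so that it is isometric rather than merely isomorphic, so that it yields a genuinely monotone basis, and --- the hardest requirement --- so that it turns a family of uniformly bounded non-universality rank into a single space of finite non-universality rank. It is exactly here that the analyticity of $\mathcal C$ and the boundedness theorem are genuinely used.
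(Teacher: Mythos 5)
Your outline has the right global shape---embed each member of $\mathcal C$ isometrically into a space with a monotone basis, then amalgamate the resulting analytic class---but the two steps that carry essentially all of the difficulty are asserted rather than proved. Your Step 2 is the crux: the claim that every separable $X$ embeds isometrically into a space $\mathbf Z(X)$ with a monotone basis, Borel-measurably in $X$, \emph{so that non-universality is preserved} (let alone ``with controlled rank'') is exactly the main new content of the paper, and it cannot be obtained by analogy with Dodos's use of the Bourgain--Pisier construction, which is an isomorphic device; no argument is offered for why such an embedding exists or why the rank of $T_{\mathbf Z(X)}$ would be controlled by that of $T_X$. The paper does this step concretely: it fixes $Z=\ell_2(C([0,1]))$ with its natural monotone basis and constructs, Borel-measurably in $X\in\mathcal{SE}(C([0,1]))$, an equivalent norm $\opnorm{\cdot}^{X}$ such that $(Z,\opnorm{\cdot}^{X})$ contains $X$ isometrically, the partial-sum projections remain norm one, and every line segment lying in the unit sphere of $(Z,\opnorm{\cdot}^{X})$ is contained in the copy $JX$ (Lemmas \ref{lemma1} and \ref{lemma2}, Claim \ref{claim32}). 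The segment property gives the \emph{exact} equivalence ``$(Z,\opnorm{\cdot}^{X})$ is isometrically universal iff $X$ is'' (Claim \ref{claim33}), via segments $[z-Ih,\,z+Ih]$ in the sphere coming from an isometric copy of $C(2^{\mathbb N})$. Because non-universality is preserved exactly, no coanalytic rank and no boundedness theorem are needed for the reduction, so your Step 1 is superfluous here; it also glosses over a real issue, namely that ill-foundedness of a tolerance tree only yields almost isometric copies of $C([0,1])$, and upgrading to isometric copies requires the Godefroy--Kalton theorem.

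Step 3 is likewise not a proof. The amalgamation of an analytic class of spaces with a monotone basis into a single non-universal space with a monotone basis is itself a substantial theorem (it is Theorem 1.2 of the author's earlier paper, which the present paper simply invokes), and your sketch---glue the bases along a tree, take a supremum norm, and claim that $T_E$ has ``finite non-universality rank''---does not establish it; ``finite rank'' cannot be right, and the mechanism by which a uniform bound on the ranks of the members would force well-foundedness of $T_E$ is precisely what would have to be proved. If you are permitted to quote the monotone-basis case as known, then the remaining work is your Step 2 together with the analyticity of the derived class $\mathcal C'=\{Y:\ Y\ \text{isometric to}\ (Z,\opnorm{\cdot}^{X})\ \text{for some}\ X\in\mathcal C\}$ (Claim \ref{claim34}); as it stands, both of these are missing from your argument.
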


It follows from this theorem and from \cite[Lemma~7(ii)]{godefroy} that the two considered genericities coincide in the isometric setting as well.

\begin{corollary}
For a class $ \mathcal{P} $ of separable Banach spaces, the following assertions are equivalent:

{\rm (a)} A separable Banach space which is isometrically universal for $ \mathcal{P} $ is also isometrically universal for all separable Banach spaces.

{\rm (b)} Every analytic set $ \mathcal{C} $ of separable Banach spaces containing all members of $ \mathcal{P} $ up to isometry must also contain an element which is isometrically universal for all separable Banach spaces.
\end{corollary}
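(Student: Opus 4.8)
The plan is to establish the two implications separately, invoking Theorem~\ref{thmmain1} for the direction (a)$\Rightarrow$(b) and the analyticity of isometric embeddability (quoted from \cite[Lemma~7(ii)]{godefroy}) for the direction (b)$\Rightarrow$(a). In particular, the corollary should come out as a short formal manipulation once Theorem~\ref{thmmain1} is in hand.

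For (a)$\Rightarrow$(b) I would argue by contraposition. Suppose some analytic set $ \mathcal{C} $ of separable Banach spaces contains all members of $ \mathcal{P} $ up to isometry, yet no $ X \in \mathcal{C} $ is isometrically universal for all separable Banach spaces. Then Theorem~\ref{thmmain1} furnishes a Banach space $ E $ with a monotone basis that is isometrically universal for $ \mathcal{C} $ but not for all separable Banach spaces. Since every $ Y \in \mathcal{P} $ embeds isometrically into some $ X \in \mathcal{C} $, and $ X $ in turn embeds isometrically into $ E $, transitivity of isometric embeddings shows that $ E $ is isometrically universal for $ \mathcal{P} $. Hence $ E $ witnesses the failure of (a), which completes the contrapositive.

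For (b)$\Rightarrow$(a), let $ X $ be a separable Banach space that is isometrically universal for $ \mathcal{P} $, and let $ \mathcal{C} $ be the set of all separable Banach spaces that embed isometrically into $ X $. By \cite[Lemma~7(ii)]{godefroy}, $ \mathcal{C} $ is analytic with respect to the Effros-Borel structure, and it contains every member of $ \mathcal{P} $ (indeed isometrically, not merely up to isometry, since $ X $ is isometrically universal for $ \mathcal{P} $). Applying (b) to this $ \mathcal{C} $, we obtain some $ Z \in \mathcal{C} $ which is isometrically universal for all separable Banach spaces; as $ Z $ embeds isometrically into $ X $, the space $ X $ is then isometrically universal for all separable Banach spaces, which is assertion (a).

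I do not expect any genuine obstacle in this deduction: the only steps requiring (entirely routine) attention are the transitivity of isometric embeddability and the observation that the class produced in the contrapositive still captures all of $ \mathcal{P} $. All of the real difficulty is concentrated in the proof of Theorem~\ref{thmmain1} and in the descriptive-set-theoretic fact cited from \cite{godefroy}.
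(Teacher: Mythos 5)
Your proof is correct and follows exactly the route the paper intends: (a)$\Rightarrow$(b) by contraposition via Theorem~\ref{thmmain1}, and (b)$\Rightarrow$(a) via the analyticity of the set of spaces isometrically embeddable into a fixed separable space, cited from \cite[Lemma~7(ii)]{godefroy}. The paper merely states that the corollary "follows from this theorem and from \cite[Lemma~7(ii)]{godefroy}", and your write-up supplies precisely that argument.
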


We do not use the Bourgain-Pisier construction, as P.~Dodos did in \cite{dodos}, which is not surprising, simply because the isomorphic and the isometric universality are quite different notions. Nevertheless, there are still analogies between our methods and methods from \cite{dodos}. The main analogy is that the result has been already proved for classes of spaces with a monotone basis (see \cite[Theorem 1.2]{kurka1}) and so our task is to find an embedding of a general separable space into a space with a monotone basis. The embedding must preserve non-universality and must be simple from the descriptive set theoretic viewpoint. One may notice that there are also analogies with the parameterized version of Zippin's embedding theorem \cite{zippin} due to B.~Bossard \cite{bossard1} (see also \cite[Chapter~5]{dodostopics}).

Let us remark that a more general version of Theorem~\ref{thmmain1} holds (cf.~with \cite[remark~(IV)]{kurka1}). Let $ H $ be a separable Banach space for which there are $ a \in H $ and a subset $ D \subset H $ whose closed linear span contains an isometric copy of $ H $ itself and such that $ \Vert a \pm d \Vert = \Vert a \Vert $ for every $ d \in D $. Then the theorem holds for the class of spaces not containing an isometric copy of $ H $. Among the universal space $ H = C(2^{\mathbb{N}}) $, the property is fulfilled e.g. by the spaces $ H = c_{0} $ and $ H = \ell_{1} $.

The basic property of our embedding is that it creates no new line segments in the unit sphere. For this reason, the method works at the same time for the class of strictly convex spaces.

\begin{theorem} \label{thmmain2}
Let $ \mathcal{C} $ be an analytic set of separable strictly convex Banach spaces. Then there exists a strictly convex Banach space $ E $ with a monotone basis which is isometrically universal for $ \mathcal{C} $.
\end{theorem}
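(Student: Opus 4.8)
The plan is to deduce Theorem~\ref{thmmain2} from the case of spaces with a monotone basis treated in \cite{kurka1}, via the same embedding that underlies Theorem~\ref{thmmain1}; the only extra ingredient is to check that this embedding preserves strict convexity. It is worth recording at the outset that every strictly convex space $X$ is automatically \emph{not} isometrically universal: a subspace of a strictly convex space is strictly convex, whereas an isometrically universal space contains an isometric copy of, say, $\ell_{\infty}^{2}$, whose unit sphere contains a segment. Thus any family $\mathcal{C}$ as in Theorem~\ref{thmmain2} already satisfies the hypothesis of Theorem~\ref{thmmain1}; but what is needed here beyond Theorem~\ref{thmmain1} is that the universal space produced be \emph{strictly convex}, so one cannot merely quote the conclusion of Theorem~\ref{thmmain1} and must instead run its proof while keeping track of strict convexity.

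First I would fix the usual coding: identify the members of $\mathcal{C}$ with closed linear subspaces of $C(2^{\mathbb{N}})$ and regard $\mathcal{C}$ as an analytic subset of the Effros--Borel space $\mathcal{SB}$ of such subspaces. Then I would construct, for each $X \in \mathcal{SB}$, a Banach space $\Phi(X)$ equipped with a monotone Schauder basis together with an isometric embedding $X \hookrightarrow \Phi(X)$, arranged so that: (i) $X \mapsto \Phi(X)$ is Borel as a map into the standard coding space of spaces with a monotone basis; (ii) $\Phi(X)$ is not isometrically universal whenever $X$ is not; and (iii) the unit sphere of $\Phi(X)$ contains no nondegenerate segment apart from (the images of) the segments already present in the unit sphere of $X$, so that in particular $\Phi(X)$ is strictly convex as soon as $X$ is. Items (i) and (ii) are exactly what Theorem~\ref{thmmain1} requires, while (iii) --- the assertion that the embedding ``creates no new line segments in the unit sphere'' --- is the additional geometric feature on which the present theorem rests.

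Granting such a $\Phi$, the argument concludes quickly. Since $\Phi$ is Borel, the set $\Phi(\mathcal{C}) = \{\Phi(X) : X \in \mathcal{C}\}$ is an analytic subset of $\mathcal{SB}$, and by (iii) it consists of strictly convex spaces, each carrying a monotone basis. Applying the monotone-basis result of \cite{kurka1} in its strictly convex form (the construction there, fed an analytic family of strictly convex spaces with a monotone basis, returns a strictly convex space with a monotone basis that is isometrically universal for the family) produces a strictly convex space $E$ with a monotone basis which is isometrically universal for $\Phi(\mathcal{C})$. As each $X \in \mathcal{C}$ embeds isometrically into $\Phi(X)$, which embeds isometrically into $E$, the space $E$ is isometrically universal for $\mathcal{C}$, as desired.

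The real work, and the main obstacle, is the construction of $\Phi$ with property (iii): one must build, around an arbitrary separable space $X$, a larger space with a monotone basis that is rich enough to contain $X$ isometrically and to destroy universality, yet whose unit sphere acquires no flat spot not inherited from $S_{X}$, all while the passage $X \mapsto \Phi(X)$ stays Borel. The delicate points will be the precise definition of the norm on $\Phi(X)$ and the verification that, for every candidate segment in $S_{\Phi(X)}$, either it already lies in the isometric copy of $S_{X}$ or else it is excluded by strict inequalities built into the norm; the descriptive set-theoretic bookkeeping and the appeal to \cite{kurka1} are comparatively routine once this is in place.
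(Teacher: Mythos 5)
Your strategy coincides with the paper's: reduce Theorem~\ref{thmmain2} to the monotone-basis case of \cite[Theorem 1.2]{kurka1} by embedding each $X$ isometrically into a space with a monotone basis in a way that is descriptively simple and creates no new line segments in the unit sphere. But as a proof it has a genuine gap, which you yourself flag as \textquotedblleft the real work\textquotedblright: the assignment $\Phi$ with your properties (i)--(iii) is never constructed, and that construction is precisely the mathematical content of the theorem. In the paper it occupies Lemma~\ref{lemma1} (a Borel-in-$X$ choice, via the Arsenin--Kunugui selection theorem and Hahn--Banach, of countably many points of $[0,1]$ generating a topology in which $IX$ is closed), Lemma~\ref{lemma2} (norms $\Vert\cdot\Vert^{X}$ on the single space $Z=\ell_{2}(C([0,1]))$ with unit ball $\Omega^{X}$, satisfying $\Vert z\Vert\leq\Vert z\Vert^{X}\leq 2\Vert z\Vert$ with equality exactly on $JX$, together with one sequence of projections forming a basis monotone for both norms and depending on $X$ in a Borel way), and Section~5, where these are combined with a strictly convex seminorm $\sigma^{X}$ through the three-dimensional norm $\varrho$, namely $\opnorm{z}^{X}=\varrho(\Vert z\Vert,\Vert z\Vert^{X},\sigma^{X}(z))$; it is this combination that forces every segment of the unit sphere of $(Z,\opnorm{\cdot}^{X})$ into $JX$ (Claim~\ref{claim32}), giving your property (iii) and Claim~\ref{claim33}. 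None of this is routine, and the proposal offers no substitute for it.

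Two smaller points. You ask for a Borel map $X\mapsto\Phi(X)$ into a coding of spaces with a monotone basis; the paper neither proves nor needs this stronger statement --- it fixes $Z$, varies the norm, shows that $(X,z)\mapsto\opnorm{z}^{X}$ is Borel, and deduces that the relation \textquotedblleft$Y$ is isometric to $(Z,\opnorm{\cdot}^{X})$\textquotedblright{} is analytic (Claim~\ref{claim34}), which already makes the family $\mathcal{C}'$ analytic; a Borel selector of an isometric copy inside $C([0,1])$ would be an extra unproved step in your outline. Your opening observation that strictly convex spaces are never isometrically universal is correct but plays no role here: what Theorem~\ref{thmmain2} needs is that the embedding preserves strict convexity (Claim~\ref{claim33}(2)), and you are right both that Theorem~\ref{thmmain1} cannot simply be quoted and that one must invoke the strictly convex form of \cite[Theorem 1.2]{kurka1}, which is indeed what the paper does.
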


It was proved by E.~Odell and Th.~Schlumprecht \cite{odschl} that there exists a separable reflexive space which is isomorphically universal for separable uniformly convex spaces (actually, there exists an isometrically universal space, see \cite{kurka2}). Since the set of all separable uniformly convex spaces is Borel (see \cite[Corollary~5]{dodosferenczi}), we obtain the following result.

\begin{corollary}
There exists a separable strictly convex Banach space which is isometrically universal for all separable uniformly convex Banach spaces. 
\end{corollary}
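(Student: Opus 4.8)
The plan is to apply Theorem~\ref{thmmain2} to the class of all separable uniformly convex Banach spaces, so essentially nothing remains to be done. Concretely, I would let $ \mathcal{C} $ be the set of (codes of) separable uniformly convex Banach spaces and verify the two hypotheses of Theorem~\ref{thmmain2}. First, every uniformly convex space is strictly convex, so every member of $ \mathcal{C} $ is a separable strictly convex Banach space. Second, by \cite[Corollary~5]{dodosferenczi} the set of separable uniformly convex spaces is Borel with respect to the Effros-Borel structure, and in particular it is analytic.

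With both hypotheses in place, Theorem~\ref{thmmain2} provides a strictly convex Banach space $ E $ with a monotone Schauder basis which is isometrically universal for $ \mathcal{C} $. Since a Banach space with a Schauder basis is separable, $ E $ is a separable strictly convex Banach space containing an isometric copy of every separable uniformly convex Banach space, which is exactly the assertion of the corollary.

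There is no genuine obstacle here: the whole substance sits in Theorem~\ref{thmmain2} (together with the cited Borelness of uniform convexity), and the deduction is immediate. I would, however, point out the contrast with the Odell--Schlumprecht theorem: their universal space is reflexive, whereas the space $ E $ produced above is not claimed to be reflexive and the present construction does not seem to yield reflexivity; obtaining a single space that is simultaneously reflexive, strictly convex, and isometrically universal for the separable uniformly convex spaces would require merging the two approaches.
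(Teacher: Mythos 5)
Your proposal is correct and follows exactly the paper's route: the corollary is obtained by applying Theorem~\ref{thmmain2} to the Borel (hence analytic) set of separable uniformly convex spaces from \cite[Corollary~5]{dodosferenczi}, using that uniform convexity implies strict convexity; the paper's mention of Odell--Schlumprecht is only context, not part of the deduction.
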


\section{Preliminaries}

By a basis we mean a Schauder basis. A basis $ x_{1}, x_{2}, \dots $ is said to be \emph{monotone} if the associated partial sum operators $ P_{n} : \sum_{k=1}^{\infty} a_{k}x_{k} \mapsto \sum_{k=1}^{n} a_{k}x_{k} $ satisfy $ \Vert P_{n} \Vert \leq 1 $.

A \emph{Polish space (topology)} means a separable completely metrizable space (topology). A set $ P $ equipped with a $ \sigma $-algebra is called a \emph{standard Borel space} if the $ \sigma $-algebra is generated by a Polish topology on $ P $. A subset of a standard Borel space is called \emph{analytic} if it is a Borel image of a Polish space.

The following result can be found e.g. in \cite[p.~297]{kechris}.

\begin{theorem}[Arsenin, Kunugui] \label{arskun}
Let $ X $ be a standard Borel space, $ Y $ a Polish space and $ R \subset X \times Y $ a Borel set such that all its sections $ R_{x} = \{ y \in Y : (x, y) \in R \}, x \in X, $ are $ \sigma $-compact. Then the projection $ \pi_{X}(R) $ of $ R $ is Borel and there exists a Borel mapping $ f : \pi_{X}(R) \to Y $ with $ f(x) \in R_{x} $ for every $ x \in \pi_{X}(R) $.
\end{theorem}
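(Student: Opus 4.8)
This is the classical Arsenin--Kurugui theorem on Borel sets with $\sigma$-compact sections; rather than look for a new argument I would reconstruct the standard one (as in \cite{kechris}). Two reductions open the proof. Since a standard Borel space carries a compatible Polish topology, assume $X$ Polish. Since $Y$ embeds as a $G_{\delta}$ subset of the Hilbert cube $\mathbb{H} = [0,1]^{\mathbb{N}}$ and compactness is intrinsic, we may even replace $Y$ by $\mathbb{H}$: then $R$ is still Borel in $X \times \mathbb{H}$, the sections $R_{x}$ are still $\sigma$-compact, and any Borel selector $f : \pi_{X}(R) \to \mathbb{H}$ with $f(x) \in R_{x}$ automatically takes values in $Y$. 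So assume $Y$ is compact metrizable; then ``$R_{x}$ is $\sigma$-compact'' just means ``$R_{x}$ is an $F_{\sigma}$ subset of $Y$''.

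The proof then rests on two structural facts about $\sigma$-compact sections. The first is Saint-Raymond's decomposition theorem: a Borel $R \subseteq X \times Y$ with $\sigma$-compact sections can be written as $R = \bigcup_{n} R_{n}$ with each $R_{n}$ Borel and each section $(R_{n})_{x}$ compact. The second is that the projection of any Borel set with compact sections is Borel. Both are proved by a reflection/separation argument resting on the structure theory of $F_{\sigma}$ sets --- e.g. via Saint-Raymond's theorem that two disjoint analytic sets whose sections can be separated by $\sigma$-compact sets can be separated by a Borel set with $\sigma$-compact sections, together with a Cantor--Bendixson/Hurewicz-type rank to control the iteration; I expect these two facts to be the only genuinely substantial ingredients.

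Granting them, the argument for a single Borel $F \subseteq X \times Y$ with compact sections is soft: $\pi_{X}(F)$ is Borel by the projection fact, and a Borel uniformization is obtained concretely by fixing a Suslin scheme $(Y_{\sigma})$ of closed subsets of $Y$ with $Y_{\emptyset} = Y$, $Y_{\sigma} = \bigcup_{n} Y_{\sigma^{\frown} n}$ and diameters tending to $0$ along each branch, and letting $f(x)$ be the point cut out by the branch that at every node passes to the least child whose trace on $F_{x}$ is nonempty; compactness forces this branch to meet $F_{x}$ in exactly one point, and Borelness of $f$ follows because each set $\{x : F_{x} \cap Y_{\sigma} \ne \emptyset\}$ is a projection of a Borel set with compact sections, hence Borel. (Equivalently, $x \mapsto F_{x}$ is Borel into the Effros Borel space and one quotes the Kuratowski--Ryll-Nardzewski selection theorem.)

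Finally, assemble the $\sigma$-compact case: with $R = \bigcup_{n} R_{n}$ as above, $g_{n}$ a Borel uniformization of $R_{n}$, and $m(x) = \min\{m : x \in \pi_{X}(R_{m})\}$ for $x$ in the Borel set $\pi_{X}(R) = \bigcup_{n} \pi_{X}(R_{n})$, the function $m$ is Borel and $f(x) := g_{m(x)}(x)$ is a Borel map with $f(x) \in (R_{m(x)})_{x} \subseteq R_{x}$. The summary is that the whole weight of the theorem sits in the two structural facts about $\sigma$-compact (equivalently, $F_{\sigma}$) sections; Borelness of $\pi_{X}(R)$ and the existence of the Borel selector are then routine.
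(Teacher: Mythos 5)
The paper does not prove this statement at all---it is quoted as a classical theorem with a reference to Kechris's book---so there is no internal proof to diverge from, and your sketch is a faithful reconstruction of exactly the textbook argument being cited: reduce to $Y$ compact via the Hilbert cube, invoke Saint-Raymond's decomposition of a Borel set with $\sigma$-compact sections into countably many Borel sets with compact sections together with the Borelness of projections of Borel sets with compact sections, uniformize each compact-section piece by the leftmost-branch argument through a closed Suslin scheme with vanishing diameters (or Kuratowski--Ryll-Nardzewski), and glue with the least-index function. The assembly steps you spell out are correct, and deferring the two structural facts to their standard proofs is reasonable here, since they carry the real weight and are established theorems rather than gaps.
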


For a topological space $ X $, the set $ \mathcal{F}(X) $ of all closed subsets of $ X $ is equipped with the \emph{Effros-Borel structure}, defined as the $ \sigma $-algebra generated by the sets
$$ \{ F \in \mathcal{F}(X) : F \cap U \neq \emptyset \} $$
where $ U $ varies over open subsets of $ X $. If $ X $ is Polish, then, equipped with this $ \sigma $-algebra, $ \mathcal{F}(X) $ forms a standard Borel space.

We will need the following basic fact (see e.g. \cite[p.~76]{kechris}).

\begin{theorem}[Kuratowski, Ryll-Nardzewski] \label{kurryl}
Let $ X $ be a Polish space. Then there exists a sequence $ d_{1}, d_{2}, \dots : \mathcal{F}(X) \to X $ of Borel mappings such that $ d_{1}(F), d_{2}(F), \dots $ is dense in $ F $ for each non-empty $ F \in \mathcal{F}(X) $.
\end{theorem}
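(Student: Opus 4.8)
The plan is to first construct a single Borel \emph{selector} and then localise it. Concretely, I would first produce a Borel map $d : \mathcal{F}(X) \to X$ with $d(F) \in F$ whenever $F \neq \emptyset$. Fix a complete metric $\rho$ compatible with the topology of $X$ and a dense sequence $x_{1}, x_{2}, \dots$ in $X$, and set $d(\emptyset) = x_{1}$. For $F \neq \emptyset$ I would recursively pick indices $k_{0}(F), k_{1}(F), \dots$: assuming $k_{0}, \dots, k_{n-1}$ have been chosen so that the open set $V_{n-1} = B(x_{k_{0}}, 2^{-1}) \cap \dots \cap B(x_{k_{n-1}}, 2^{-n})$ still meets $F$ (with $V_{-1} = X$, and $B(x,r)$ the open $\rho$-ball), let $k_{n}(F)$ be the least $k$ for which $V_{n-1} \cap B(x_{k}, 2^{-n-1})$ meets $F$; such a $k$ exists since $(x_{j})$ is dense near any point of the open set $V_{n-1}$ lying in $F$, and $F \cap V_{n-1}$ is nonempty by the induction hypothesis. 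Writing $F_{n} = \overline{F \cap V_{n}}$, one gets a decreasing sequence of nonempty closed subsets of $F$ with $\operatorname{diam} F_{n} \leq 2^{-n}$, so completeness of $\rho$ yields that $\bigcap_{n} F_{n}$ is a single point, which I declare to be $d(F)$; note $\rho(d(F), x_{k_{n}(F)}) \leq 2^{-n}$.

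The key point for measurability is that, for every open $O \subseteq X$, the set $\{ F \in \mathcal{F}(X) : F \cap O \neq \emptyset \}$ lies in the Effros-Borel $\sigma$-algebra by definition, and that each $V_{n}$, being a finite intersection of open balls, is open. Hence every condition appearing in the recursion is Borel, so by induction each $k_{n} : \mathcal{F}(X) \to \mathbb{N}$ is Borel, and so is $F \mapsto x_{k_{n}(F)}$; since $x_{k_{n}(F)} \to d(F)$ pointwise, $d$ is Borel. For the localisation I would fix a countable base $U_{1}, U_{2}, \dots$ of nonempty open sets, say all balls of rational radius about the points $x_{j}$. Because for open $V$ one has $\overline{F \cap U_{m}} \cap V \neq \emptyset$ precisely when $F \cap (U_{m} \cap V) \neq \emptyset$, the map $\Phi_{m} : F \mapsto \overline{F \cap U_{m}}$ is Borel from $\mathcal{F}(X)$ to $\mathcal{F}(X)$. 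Define $d_{m}(F) = d(\Phi_{m}(F))$ if $F \cap U_{m} \neq \emptyset$ and $d_{m}(F) = d(F)$ otherwise; then $d_{m}$ is Borel and $d_{m}(F) \in \overline{F \cap U_{m}} \subseteq F$ whenever $F$ meets $U_{m}$. Reindexing $(d_{m})$ as $d_{1}, d_{2}, \dots$, density is immediate: given nonempty $F$, a point $x \in F$ and $\varepsilon > 0$, choose $m$ with $x \in U_{m}$ and $\operatorname{diam} U_{m} < \varepsilon$; then $x$ and $d_{m}(F)$ both lie in $\overline{U_{m}}$, so $\rho(d_{m}(F), x) \leq \operatorname{diam} U_{m} < \varepsilon$.

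I expect the only genuine obstacle to be the bookkeeping in the second paragraph, namely verifying that the recursively chosen indices stay Borel in $F$ and that $\Phi_{m}$ is Effros-Borel measurable; both reduce to the single observation that $\{ F : F \cap O \neq \emptyset \}$ is a generator of the structure for every open $O$, together with the facts that finite intersections of open sets are open and that an open set meets a closure exactly when it meets the set itself. Everything else --- convergence of the nested sets $F_{n}$ to a point, and the elementary estimate giving density --- is just Cantor's intersection theorem in a complete metric space.
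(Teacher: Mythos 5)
The paper does not prove this statement at all; it is quoted as a classical result with a reference to Kechris's book, so there is no in-paper argument to compare against. Your proof is correct and is essentially the standard one (cf.\ Theorems 12.13 and 12.14 in Kechris): first a single Borel selector built by recursively shrinking through balls centred at a dense sequence and invoking Cantor's intersection theorem, with Borel-ness of the indices $k_{n}$ coming from the generators $\{F : F \cap O \neq \emptyset\}$, and then the dense sequence of selectors obtained by composing with the Borel maps $F \mapsto \overline{F \cap U_{m}}$ for a countable base; all the key verifications (existence of $k_{n}$, $\operatorname{diam} F_{n} \leq 2^{-n}$, Effros-Borel measurability of $\Phi_{m}$, and the density estimate) are stated and justified correctly.
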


The \emph{standard Borel space of subspaces of a separable Banach space $ A $} is defined by
$$ \mathcal{SE}(A) = \big\{ F \in \mathcal{F}(A) : \textrm{$ F $ is linear} \big\} $$
and considered as a subspace of $ \mathcal{F}(A) $.

By an \emph{analytic set of separable Banach spaces} we mean an analytic subset of $ \mathcal{SE}(C([0, 1])) $. It is well known that the spaces $ C([0, 1]) $ and $ C(2^{\mathbb{N}}) $ are isometrically universal for all separable Banach spaces.

At the end of this short section, we recall a classical result from Banach space theory (see e.g. \cite[p.~125]{fhhmpz}).

\begin{theorem}[Banach, Dieudonn\'e] \label{bandie}
Let $ X $ be a Banach space and $ M $ be a convex subset of $ X^{*} $. If $ M \cap nB_{X^{*}} $ is $ w^{*} $-closed for every $ n \in \mathbb{N} $, then $ M $ is $ w^{*} $-closed.
\end{theorem}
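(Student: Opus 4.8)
The plan is to deduce this from the Hahn--Banach separation theorem in the locally convex space $(X^{*}, w^{*})$, whose continuous dual is $X$, after an inductive construction of a suitable separating vector. Since $M$ is convex, $M$ is $w^{*}$-closed as soon as every $g \in X^{*} \setminus M$ admits a $w^{*}$-open neighbourhood disjoint from $M$, and translating $M$ by $-g$ reduces this to the case $g = 0 \notin M$. One must first note that the hypothesis is inherited by $M - g$: indeed $(M - g) \cap nB_{X^{*}}$ equals $\big( M \cap (n + \Vert g \Vert) B_{X^{*}} \big) \cap (g + nB_{X^{*}})$ translated by $-g$, an intersection of $w^{*}$-closed sets (using that $M \cap rB_{X^{*}}$ is $w^{*}$-closed for every real $r$, not only for integers). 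So it suffices to produce, whenever $0 \notin M$, a vector $x \in X$ and $\alpha > 0$ with $\mathrm{Re}\, f(x) \geq \alpha$ for all $f \in M$.

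To build such an $x$, I would construct by induction on $n \geq 0$ finite sets $F_{n} \subset X$, with $\Vert y \Vert \leq 1/n$ for every $y \in F_{n}$ when $n \geq 1$ (no size restriction on $F_{0}$), such that, writing $Q_{n} = \{ f \in X^{*} : \mathrm{Re}\, f(y) \leq 1 \text{ for all } y \in F_{0} \cup \dots \cup F_{n} \}$, one has $M \cap (n+1) B_{X^{*}} \cap Q_{n} = \emptyset$. The case $n = 0$ is a Hahn--Banach separation of the $w^{*}$-compact convex set $M \cap B_{X^{*}}$ from the origin. For the step $n \to n+1$, the set $K = M \cap (n+2) B_{X^{*}} \cap Q_{n}$ is $w^{*}$-compact (being $w^{*}$-closed and bounded) and convex, and by the inductive hypothesis it misses $(n+1) B_{X^{*}}$; hence each $f \in K$ has $\Vert f \Vert > n+1$, so there is a vector $y$ of norm $\leq 1/(n+1)$ with $\mathrm{Re}\, f(y) > 1$. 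The sets $\{ h : \mathrm{Re}\, h(y) > 1 \}$ so obtained $w^{*}$-cover $K$; choosing a finite subcover and collecting the corresponding vectors as $F_{n+1}$ gives $K \cap Q_{n+1} = \emptyset$, which is the assertion for $n+1$.

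Setting $S = \bigcup_{n \geq 0} F_{n}$, the construction yields that $M$ is disjoint from $\{ f : \mathrm{Re}\, f(y) \leq 1 \text{ for all } y \in S \}$, since every $f \in M$ lies in some ball $(n+1) B_{X^{*}}$. The decisive point, forced by the size bounds, is that the elements of $S$ form a norm-null sequence $(y_{k})_{k}$. Hence $T : X^{*} \to c_{0}$, $Tf = (f(y_{k}))_{k}$, is well defined and bounded, because $|f(y_{k})| \leq \Vert f \Vert\, \Vert y_{k} \Vert \to 0$, and the disjointness just recorded says exactly that $T(M)$ avoids $\{ a \in c_{0} : \mathrm{Re}\, a_{k} \leq 1 \text{ for all } k \}$, which contains the open unit ball of $c_{0}$ and so is a neighbourhood of $0$. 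Therefore $0 \notin \overline{T(M)}$, and a Hahn--Banach separation in $c_{0}$ gives $\varphi = (\varphi_{k})_{k} \in \ell_{1} = (c_{0})^{*}$ with $\mathrm{Re}\, \varphi(Tf) \geq \alpha > 0$ for all $f \in M$. Finally $x := \sum_{k} \varphi_{k} y_{k}$ converges in $X$ (as $\sum_{k} |\varphi_{k}|\, \Vert y_{k} \Vert < \infty$) and satisfies $f(x) = \varphi(Tf)$, hence $\mathrm{Re}\, f(x) \geq \alpha$ on $M$, which is the desired separation. The complex scalar case is the same up to standard real/imaginary adjustments.

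The part I expect to be the genuine obstacle is the passage from the disjointness of $M$ from the ``polar'' of the infinite set $S$ to an honest separation by a single element of $X$: that polar is not a $w^{*}$-neighbourhood of the origin, so this step cannot be carried out by a naive weak-$*$ argument and relies essentially on the norm-null structure of $S$. This is precisely why the size control $\Vert y \Vert \leq 1/n$ on $F_{n}$ has to be built into the induction from the start, and why the full ball-by-ball closedness hypothesis --- rather than mere $w^{*}$-closedness of $M \cap B_{X^{*}}$ --- is used at every stage. By contrast, the translation reduction and the bookkeeping in the inductive step are routine, though the verification that the hypothesis survives translation should not be skipped.
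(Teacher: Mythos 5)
The paper does not prove this statement at all: it is quoted as a classical result (Krein--\v{S}mulian/Banach--Dieudonn\'e) with a reference to the textbook of Fabian et al., so there is no internal proof to compare with. Your argument is correct and is essentially the standard textbook proof: the reduction by translation (including the check that $M\cap rB_{X^*}$ is $w^*$-closed for all real $r$), the inductive construction of finite sets of vectors of norm at most $1/n$ whose polar misses $M\cap(n+1)B_{X^*}$, and the passage through the norm-null sequence via the operator into $c_0$ and a Hahn--Banach separation there are all carried out correctly, so the proposal is a valid proof of the cited theorem.
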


\section{First lemma}

\begin{lemma} \label{lemma1}
Let $ A $ be a separable Banach space. Then there exist an isometry $ I : A \to C([0, 1]) $ and a sequence $ p_{1}, p_{2}, \dots $ of Borel functions $ p_{n} : \mathcal{SE}(A) \to [0, 1] $ such that the following properties are valid for every $ X \in \mathcal{SE}(A) $:

{\rm (i)} $ p_{1}(X) = 0 $ and $ p_{2}(X) = 1 $,

{\rm (ii)} $ p_{n}(X) \neq p_{m}(X) $ for $ n \neq m $,

{\rm (iii)} the sequence $ p_{1}(X), p_{2}(X), \dots $ is dense in $ [0, 1] $,

{\rm (iv)} the subspace $ IX $ of $ C([0, 1]) $ is closed in the topology generated by the points $ p_{1}(X), p_{2}(X), \dots $.
\end{lemma}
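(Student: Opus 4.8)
The plan is to start from the standard isometric embedding of $A$ into $C([0,1])$ and then, using Borel selectors, carefully choose for each subspace $X$ a countable dense set of parameters in $[0,1]$ adapted to $X$. Concretely: fix first an isometry $J : A \to C(2^{\mathbb{N}})$ (such a $J$ exists by universality of $C(2^{\mathbb{N}})$), and fix a homeomorphic embedding $\varphi$ of $2^{\mathbb{N}}$ into $[0,1]$ onto the usual Cantor ternary set $K$; extending functions on $K$ affinely over the complementary intervals gives an isometry $C(2^{\mathbb{N}}) \cong C(K) \hookrightarrow C([0,1])$, and we let $I : A \to C([0,1])$ be the composition. The point of using the Cantor set is that functions in $IA$ are determined by their values on $K$, hence by their values on any dense subset of $K$; this is what will eventually force property (iv).

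Next I would construct the functions $p_n$. Apply Theorem~\ref{kurryl} to the Polish space $K \cong 2^{\mathbb{N}}$ to get Borel maps $d_1, d_2, \dots : \mathcal{F}(K) \to K$ with $d_k(F)$ dense in $F$; but here we want something dense in all of $K$ for every $X$, so more simply fix once and for all a countable dense sequence $(t_k)$ in $K$, with $t_1, t_2$ chosen so that $\varphi^{-1}$ sends them appropriately, and arrange $p_1(X) = 0$, $p_2(X) = 1$ by a harmless relabeling (adjoining $0$ and $1$, which lie in $K$, to the front of the list and deleting repetitions). The only role $X$ plays is in guaranteeing (ii), injectivity of $n \mapsto p_n(X)$: since a single fixed dense sequence already has distinct terms, the dependence on $X$ is in fact unnecessary for (i)--(iii) and we may take the $p_n$ to be constant functions. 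The real content is that we must also be able to perturb, in a Borel way, so that (iv) holds — and here the dependence on $X$ genuinely enters.

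For (iv): the topology $\tau_X$ on $IX$ generated by the evaluation functionals $f \mapsto f(p_n(X))$ is the topology of pointwise convergence on the set $\{p_n(X) : n\}$. Since this set is dense in $K$ and every $f \in IA$ is affine off $K$ and continuous on $K$, pointwise convergence on $\{p_n(X)\}$ of a bounded net in $IA$ forces pointwise convergence on $K$, hence uniform convergence would fail in general — so boundedness must be brought in. This is exactly where Theorem~\ref{bandie} (Banach--Dieudonné) is used: one shows $IX$ is $\tau_X$-closed by checking that $IX \cap nB_{C([0,1])}$ is $\tau_X$-closed for each $n$, and on bounded sets $\tau_X$-convergence of functions in the (closed, hence complete) subspace $IX$, together with the equicontinuity coming from the affine-interpolation structure, upgrades to uniform convergence, whose limit stays in the closed subspace $IX$. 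I expect the main obstacle to be precisely this last point: verifying that $\tau_X$ restricted to norm-bounded subsets of $IA$ coincides with the norm topology (equivalently, that a $\tau_X$-Cauchy bounded net is uniformly Cauchy), which requires using that the functions are piecewise affine with nodes in $K$ so that their oscillation on each complementary interval of $K$ is controlled by the values at the two endpoints, both of which are approximated by the dense set $\{p_n(X)\}$; once that equicontinuity-type estimate is in place, (iv) follows from Theorem~\ref{bandie} and the measurability of the $p_n$ is immediate since they are constant (or, if a genuinely $X$-dependent choice is needed to also secure some auxiliary separation, from Theorem~\ref{kurryl} applied to the Borel map $X \mapsto \overline{IX} \in \mathcal{F}(K)$).
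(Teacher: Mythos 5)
Your approach breaks down at property (iv), and the breakdown is structural, not just a missing detail. With your embedding (through $C(2^{\mathbb{N}})\cong C(K)$ and affine extension), the evaluation functionals at points of the Cantor set $K$ pull back to a fixed norming subset of $B_{A^{*}}$, namely $\{\delta_{t}\circ J : t\in K\}$. Closedness of $IX$ in the topology generated by countably many point evaluations is, by the bipolar theorem, equivalent to the statement that for every $a\in A\setminus X$ some \emph{finite linear combination} of the chosen evaluations annihilates $X$ but not $a$. For many $X$ no such combination exists at all, no matter which points of $K$ you pick: take $A=C(2^{\mathbb{N}})$, $J$ the identity, and $X=\ker\lambda$ for a non-atomic probability measure $\lambda$; then $X^{\perp}=\mathbb{R}\lambda$ contains no nonzero finitely supported measure, so the closure of $IX$ in the topology generated by \emph{any} countable family of point evaluations is the whole space and (iv) fails. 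In particular your assertion that ``the dependence on $X$ is in fact unnecessary for (i)--(iii) and we may take the $p_{n}$ to be constant'' cannot be rescued afterwards: the defect is in the choice of $I$, since evaluations along your embedding simply do not supply functionals vanishing on a given subspace. Your proposed repair is also not sound on its own terms: elements of $IX$ are arbitrary continuous functions on $K$ (affinely extended), so there is no equicontinuity and bounded pointwise convergence on a dense subset of $K$ does not upgrade to uniform convergence; moreover Theorem~\ref{bandie} concerns $w^{*}$-closedness of convex sets in a dual space and does not let you deduce $\tau_{X}$-closedness of $IX$ from closedness of its bounded parts in a weak topology generated by countably many functionals.

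The paper's proof is built precisely to avoid this obstruction. It takes $I$ induced by a \emph{continuous surjection} $\varphi:[0,1]\to (B_{A^{*}},w^{*})$, $(Ia)(t)=\varphi(t)(a)$, so that every functional in the dual ball, in particular every Hahn--Banach functional annihilating $X$, is realized as evaluation at some point of $[0,1]$. Then the argument splits: (a) $IA$ itself is closed in the full topology of pointwise convergence (one factors a pointwise limit as $h\circ\varphi$ with $h$ affine, $w^{*}$-continuous and $h(0)=0$, and uses Theorem~\ref{bandie} to see that its linear extension is $w^{*}$-continuous, hence given by some $a\in A$), which by a covering argument yields one fixed countable set $v_{1},v_{2},\dots$ handling all $f\notin IA$; and (b) for $f=Ia$ with $a\notin X$ one needs, in a Borel-in-$X$ way, functionals $s_{n}(X)\in B_{A^{*}}$ vanishing on $X$ and positive on basic balls, obtained via the Arsenin--Kunugui selection theorem applied to the Borel sets of Claim~\ref{claim13}, and then converted into points $q_{n}(X)=\min\varphi^{-1}(s_{n}(X))$. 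This Borel selection of annihilating functionals, and the fact that the embedding turns them into point evaluations, is the real content of the lemma and is absent from your proposal.
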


To provide an isometry $ I $, we follow a standard method. Let $ \varphi : [0, 1] \to (B_{A^{*}}, w^{*}) $ be a continuous surjection and let
$$ (Ia)(t) = \varphi(t)(a), \quad a \in A, \; t \in [0, 1]. $$
In several steps, we show that the choice of $ I $ works and a suitable sequence $ p_{1}, p_{2}, \dots $ of Borel functions exists.

\begin{claim} \label{claim11}
$ IA $ is closed in the pointwise topology.
\end{claim}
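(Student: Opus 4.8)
The plan is to exploit the standard construction $(Ia)(t)=\varphi(t)(a)$, where $\varphi:[0,1]\to(B_{A^*},w^*)$ is a continuous surjection, and to identify $IA$ with the set of functions on $[0,1]$ that arise from continuous linear functionals on $A$ in a compatible way. Concretely, a function $f\in C([0,1])$ lies in $IA$ precisely when there is some $a\in A$ with $f(t)=\varphi(t)(a)$ for all $t$; equivalently, $f$ factors through $\varphi$ in the sense that $\varphi(s)=\varphi(t)$ forces $f(s)=f(t)$, and the induced function on $B_{A^*}$ is the restriction of an element of $A=A^{**}|_{B_{A^*}}$. The first step is thus to argue that if $f\in C([0,1])$ is a pointwise limit of a net (or, since we are in a separable/metrizable situation, it suffices to treat sequences carefully, but nets are cleanest) of functions $Ia_\alpha$, then $f$ respects the fibers of $\varphi$: whenever $\varphi(s)=\varphi(t)$ we have $(Ia_\alpha)(s)=(Ia_\alpha)(t)$ for every $\alpha$, hence $f(s)=f(t)$ in the limit. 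This lets us push $f$ down to a well-defined continuous function $\tilde f$ on the compact metrizable space $\varphi([0,1])=B_{A^*}$ (with the $w^*$-topology).

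The second and main step is to show that this $\tilde f:B_{A^*}\to\mathbb{R}$, which is a pointwise limit of the affine $w^*$-continuous functions $\hat a_\alpha:=a_\alpha|_{B_{A^*}}$, is itself of the form $\hat a$ for some $a\in A$. Affineness is preserved under pointwise limits, so $\tilde f$ is an affine $w^*$-continuous function on $B_{A^*}$ vanishing at $0$ (or at least we can normalize so that it does). A clean way to finish is to invoke the Banach--Dieudonn\'e theorem (Theorem~\ref{bandie}): consider the linear functional on $A^*$ determined by $\tilde f$ on the ball, i.e., the candidate element of $A^{**}$, and show its graph, or rather the image of $A$ under the canonical embedding, is $w^*$-closed because its intersection with each ball $nB_{A^*}$ is $w^*$-closed --- but more directly, one checks that $\tilde f$ extends to a bounded linear functional $\Lambda$ on $A^*$ by homogeneity, that $\Lambda$ is $w^*$-continuous on each ball $nB_{A^*}$ (being a pointwise limit there of the $w^*$-continuous functionals $\tfrac{1}{n}$-rescaled, using the affine structure), hence $\Lambda$ is $w^*$-continuous on $A^*$ by Banach--Dieudonn\'e, and therefore $\Lambda=\hat a$ for some $a\in A$. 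Then $f=Ia\in IA$, proving $IA$ is pointwise-closed.

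I expect the main obstacle to be the bookkeeping in the second step: passing from ``pointwise limit of $\hat a_\alpha$ on $B_{A^*}$'' to ``$w^*$-continuous linear functional on all of $A^*$''. The subtlety is that the net $(a_\alpha)$ need not be norm-bounded a priori, so one cannot immediately apply weak$^*$-compactness of balls in $A^{**}$; the fix is that pointwise convergence on $B_{A^*}$ of $\hat a_\alpha$ to a (bounded) function $\tilde f$ forces $\sup_\alpha\|a_\alpha\|$ to be essentially controlled by $\|\tilde f\|_\infty$ via the uniform boundedness principle applied in $A^{**}$, after which the extension $\Lambda$ to $A^*$ is bounded and the Banach--Dieudonn\'e argument applies cleanly. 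An alternative route that sidesteps some of this is to observe directly that $IA$, being the isometric image of the Banach space $A$ under $I$ and being $\|\cdot\|_\infty$-closed, is a closed linear subspace, and then use that a pointwise limit $f$ of elements of $IA$ which lies in $C([0,1])$ must in fact be a $\|\cdot\|_\infty$-limit along a suitable sequence --- but this last claim is false in general, so the Banach--Dieudonn\'e / weak$^*$-continuity argument seems unavoidable and is the real content of the claim.
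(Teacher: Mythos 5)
Your main line is essentially the paper's own argument: show that $f$ is constant on the fibers of $\varphi$, push it down to an affine function $\tilde f$ on $(B_{A^{*}},w^{*})$ with $\tilde f(0)=0$ (no normalization is needed; it vanishes at $0$ automatically), check $w^{*}$-continuity, extend by homogeneity to a linear functional on $A^{*}$, and conclude via Banach--Dieudonn\'e that it is induced by some $a\in A$, so $f=Ia$. That outline is correct and is exactly what the paper does.

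Two of your justifications are off, however, and both concern the same point: where the $w^{*}$-continuity comes from. It cannot come from ``$\tilde f$ is a pointwise limit of the $w^{*}$-continuous functions $\hat a_{\alpha}$,'' nor can continuity of $\Lambda$ on $nB_{A^{*}}$ come from its being a pointwise limit of rescaled functionals: pointwise limits of continuous functions need not be continuous, so that inference proves nothing. The correct source is the one you state in your first step and then drift away from: $f$ is itself continuous on the compact space $[0,1]$ and constant on the fibers of the continuous surjection $\varphi$, so $\varphi$ is a quotient map and $\tilde f$ is $w^{*}$-continuous on $B_{A^{*}}$ (the paper proves this by the sequential argument, using compactness of $[0,1]$ and metrizability of $(B_{A^{*}},w^{*})$); continuity of $\Lambda$ on each $nB_{A^{*}}$ then follows by rescaling. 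Relatedly, the ``main obstacle'' you identify --- possible unboundedness of the net $(a_{\alpha})$, to be repaired by uniform boundedness in $A^{**}$ --- is both spurious and incorrectly resolved: the uniform boundedness principle needs pointwise boundedness and does not apply to a merely convergent net (and the pointwise closure here is a genuine net statement, not a sequential one). No bound on the approximating net is needed anywhere: $|\tilde f|\le\Vert f\Vert_{\infty}$ on $B_{A^{*}}$ holds trivially because $\tilde f$ is induced by $f$, so $\Lambda$ is bounded, and all continuity is extracted from $f$ itself as above. With these two repairs --- which use only material already present in your first step --- your proof coincides with the paper's.
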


\begin{proof}
Assume that $ f \in C([0, 1]) $ belongs to the closure of $ IA $ in the pointwise topology. We show first that there is a function $ h : B_{A^{*}} \to [-\Vert f \Vert, \Vert f \Vert] $ such that
$$ f = h \circ \varphi. $$
We just need to check that $ \varphi(u) = \varphi(v) \Rightarrow f(u) = f(v) $. Given $ u, v \in [0, 1] $ with $ \varphi(u) = \varphi(v) $, we obtain for every $ a \in A $ that $ (Ia)(u) = \varphi(u)(a) = \varphi(v)(a) = (Ia)(v) $. Since $ f $ belongs to the closure of $ IA $ in the pointwise topology, we get $ f(u) = f(v) $.

We have $ h(0) = 0 $. Indeed, choosing $ u \in [0, 1] $ with $ \varphi(u) = 0 $, we obtain $ (Ia)(u) = \varphi(u)(a) = 0 $ for $ a \in A $, and so $ 0 = f(u) = h(\varphi(u)) = h(0) $.

Further, $ h $ is affine. Consider $ a^{*}, b^{*} \in B_{A^{*}} $ and $ \alpha, \beta \in [0, 1] $ with $ \alpha + \beta = 1 $. Choose $ u, v, w \in [0, 1] $ with $ \varphi(u) = a^{*}, \varphi(v) = b^{*} $ and $ \varphi(w) = \alpha a^{*} + \beta b^{*} $. We have
\begin{align*}
(Ia)(w) & = \varphi(w)(a) = (\alpha a^{*} + \beta b^{*})(a) = \alpha a^{*}(a) + \beta b^{*}(a) \\
 & = \alpha \varphi(u)(a) + \beta \varphi(v)(a) = \alpha (Ia)(u) + \beta (Ia)(v)
\end{align*}
for $ a \in A $, and so
\begin{align*}
f(w) & = \alpha f(u) + \beta f(v), \\
h(\varphi(w)) & = \alpha h(\varphi(u)) + \beta h(\varphi(v)), \\
h(\alpha a^{*} + \beta b^{*}) & = \alpha h(a^{*}) + \beta h(b^{*}).
\end{align*}

Finally, $ h $ is $ w^{*} $-continuous. Let $ a^{*}_{1}, a^{*}_{2}, \dots $ be a sequence in $ B_{A^{*}} $ converging to some $ a^{*} $ in the $ w^{*} $-topology. We need to check that $ h(a^{*}_{n}) \to h(a^{*}) $. Assume the opposite. Then there is a subsequence $ a^{*}_{n_{k}} $ such that $ h(a^{*}_{n_{k}}) \to c \neq h(a^{*}) $. For all $ k \in \mathbb{N} $, let us consider $ u_{k} \in [0, 1] $ such that $ \varphi(u_{k}) = a^{*}_{n_{k}} $. There is a subsequence $ u_{k_{l}} $ which converges to some $ u $. Since $ \varphi $ is continuous, we have $ \varphi(u_{k_{l}}) \to \varphi(u) $, and, using $ \varphi(u_{k}) = a^{*}_{n_{k}} \to a^{*} $, we obtain $ \varphi(u) = a^{*} $. Since $ f $ is continuous, we have $ f(u_{k_{l}}) \to f(u) $, and, using $ f(u_{k_{l}}) = h(\varphi(u_{k_{l}})) = h(a^{*}_{n_{k_{l}}}) \to c $, we obtain $ f(u) = c $. Consequently, $ f(u) = c \neq h(a^{*}) = h(\varphi(u)) = f(u) $, which is a contradiction.

We have shown that $ h $ is $ w^{*} $-continuous, affine and $ h(0) = 0 $. So, $ h $ can be extended to a linear functional on $ A^{*} $ which is $ w^{*} $-continuous by Theorem~\ref{bandie}. Consequently, there is $ a \in A $ such that $ h(a^{*}) = a^{*}(a) $ for all $ a^{*} \in B_{A^{*}} $. Then $ f = Ia $, and the proof of the claim is finished.
\end{proof}

\begin{claim} \label{claim12}
There is a sequence of numbers $ v_{1}, v_{2}, \dots $ that is dense in $ [0, 1] $ and generates a topology in which $ IA $ is closed.
\end{claim}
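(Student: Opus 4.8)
The plan is to leverage Claim~\ref{claim11} together with a Lindel\"of argument to cut the pointwise topology down to countably many coordinates. By Claim~\ref{claim11}, $IA$ is closed in the pointwise topology of $C([0,1])$, so $C([0,1]) \setminus IA$ is pointwise-open. Since $(C([0,1]), \Vert \cdot \Vert)$ is separable metrizable, it is hereditarily Lindel\"of, so $C([0,1]) \setminus IA$ with the norm subspace topology is Lindel\"of; as the identity map onto the (coarser) pointwise subspace topology is continuous, $C([0,1]) \setminus IA$ is Lindel\"of in the pointwise topology as well.

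Next, for each $f \in C([0,1]) \setminus IA$ I would use that the complement of $IA$ is pointwise-open to pick a finite set $T_f \subset [0,1]$ and a number $\varepsilon_f > 0$ such that the basic neighbourhood $U_f = \{ g \in C([0,1]) : |g(t) - f(t)| < \varepsilon_f \text{ for all } t \in T_f \}$ is disjoint from $IA$. These sets cover $C([0,1]) \setminus IA$, so the Lindel\"of property yields a countable subcover $U_{f_1}, U_{f_2}, \dots$. I would then set $V_0 = \bigcup_k T_{f_k}$, a countable subset of $[0,1]$, enlarge it to $V = V_0 \cup D$ with $D$ a countable dense subset of $[0,1]$, and enumerate $V$ as $v_1, v_2, \dots$; this sequence is dense in $[0,1]$.

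Finally I would verify that $IA$ is closed in the topology $\tau$ on $C([0,1])$ generated by the evaluations $g \mapsto g(v_n)$. If $f \notin IA$, then $f \in U_{f_k}$ for some $k$; since $T_{f_k} \subset V_0 \subset V$, the set $U_{f_k}$ is a finite intersection of $\tau$-open sets, hence $\tau$-open, it contains $f$, and it misses $IA$ --- so $f$ is not in the $\tau$-closure of $IA$. Hence the $\tau$-closure of $IA$ equals $IA$, which is Claim~\ref{claim12}. The only step I expect to need a little care is the transfer of the Lindel\"of property from the norm topology to the pointwise topology; the rest is bookkeeping. I also note that finitely many further points can be thrown into $V$ (in particular $0$ and $1$) without harm, since enlarging $V$ only refines $\tau$ and hence keeps $IA$ closed --- this will be convenient for conditions (i)--(iii) of Lemma~\ref{lemma1}, although it plays no role in Claim~\ref{claim12} itself.
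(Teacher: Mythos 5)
Your proof is correct and follows essentially the same route as the paper: use Claim~\ref{claim11} to cover $C([0,1])\setminus IA$ by basic pointwise-open neighbourhoods determined by finitely many points, extract a countable subcover, and collect the associated points together with a countable dense set. The Lindel\"of argument you spell out (hereditary Lindel\"ofness of the separable metric space, passed to the coarser pointwise topology) is exactly the justification the paper leaves implicit in the phrase ``can be covered by countably many such neighborhoods.''
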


\begin{proof}
By Claim~\ref{claim11}, every $ f \in C([0, 1]) \setminus IA $ has a neighborhood $ U_{f} \subset C([0, 1]) \setminus IA $ of the form
$$ U_{f} = \big\{ g \in C([0, 1]) : |g(u_{k}) - f(u_{k})| < \varepsilon, 1 \leq k \leq n \big\} $$
for some suitable $ \varepsilon, n $ and $ u_{1}, \dots, u_{n} $. Since $ C([0, 1]) \setminus IA $ can be covered by countably many such neighborhoods, it is possible to collect all rational numbers and all numbers $ u_{k} $ associated to the members of this covering.
\end{proof}

\begin{claim} \label{claim13}
For every open ball $ U \subset A $, the set
$$ R = \big\{ (X, a^{*}) \in \mathcal{SE}(A) \times B_{A^{*}} : a^{*}(x) = 0, x \in X, a^{*}(x) > 0, x \in U \big\} $$
is Borel in $ \mathcal{SE}(A) \times (B_{A^{*}}, w^{*}) $ and all its sections $ R_{X}, X \in \mathcal{SE}(A), $ are $ \sigma $-compact.
\end{claim}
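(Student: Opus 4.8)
The plan is to prove Claim~\ref{claim13} by separately verifying Borelness and $\sigma$-compactness of the sections, with the Borelness being the real work.

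For the sections: fix $X \in \mathcal{SE}(A)$ and consider $R_X = \{ a^* \in B_{A^*} : a^*|_X = 0 \text{ and } a^*(x) > 0 \text{ for all } x \in U \}$. The condition $a^*|_X = 0$ defines a $w^*$-closed subset of $B_{A^*}$, hence a $w^*$-compact set. The condition $a^*(x) > 0$ for all $x$ in the open ball $U = \{ x : \Vert x - x_0 \Vert < r \}$ is an open condition for each individual $x$, but the intersection over all $x \in U$ need not be open. However, by homogeneity/convexity one can reduce the requirement: writing $U = x_0 + rB_A^{\circ}$, the condition ``$a^*(x) > 0$ for all $x \in U$'' is equivalent to ``$a^*(x_0) \geq r \Vert a^* \Vert$'' which, combined with $\Vert a^* \Vert \leq 1$, gives $a^*(x_0) \geq r$ on the relevant set once we also know $a^*|_X = 0$ forces nothing extra. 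More precisely $R_X = \{ a^* \in B_{A^*} : a^*|_X = 0,\ a^*(x_0) \geq r \}$ up to the boundary, and this set is $w^*$-closed, so $\sigma$-compact; one must be slightly careful with the strict versus non-strict inequality, writing $R_X$ as a countable union of $w^*$-compact sets $\{ a^*|_X = 0,\ a^*(x_0) \geq r,\ \Vert a^* \Vert \leq 1 - 1/k \} \cup (\text{the part with norm near } 1)$, or simply observing that the strict inequality $a^*(x) > 0$ on all of $U$ is implied by $a^*(x_0) \ge r\Vert a^*\Vert$ together with $a^* \ne 0$, and $\{a^* \ne 0\}$ is $w^*$-$\sigma$-compact inside $B_{A^*}$. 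Either way each $R_X$ is a countable union of $w^*$-compact sets.

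For Borelness of $R$ in $\mathcal{SE}(A) \times (B_{A^*}, w^*)$: invoke the Kuratowski--Ryll-Nardzewski selection theorem (Theorem~\ref{kurryl}) to obtain Borel functions $d_1, d_2, \dots : \mathcal{F}(A) \to A$ with $d_1(X), d_2(X), \dots$ dense in $X$. Then the condition ``$a^*(x) = 0$ for all $x \in X$'' is equivalent, by density and continuity of $a^* \mapsto a^*(d_n(X))$, to ``$a^*(d_n(X)) = 0$ for all $n$''. Each map $(X, a^*) \mapsto a^*(d_n(X))$ is Borel (it is the composition of the Borel map $(X,a^*) \mapsto (d_n(X), a^*)$ with the jointly continuous evaluation on $A \times (B_{A^*}, w^*)$), so $\{ (X,a^*) : a^*|_X = 0 \} = \bigcap_n \{ (X,a^*) : a^*(d_n(X)) = 0 \}$ is Borel. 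The condition ``$a^*(x) > 0$ for all $x \in U$'' does not involve $X$ at all and defines a fixed Borel (in fact, using the reduction above, $F_\sigma$) subset of $(B_{A^*}, w^*)$: pick a countable dense set $\{u_j\}$ in $U$, then $\{ a^* : a^*(x) > 0,\ x \in U \} = \{ a^* : a^*(u_j) \geq 0 \text{ for all } j \} \cap \{ a^* : a^* \ne 0 \}$ using convexity and continuity, which is the intersection of a $w^*$-closed set with a $w^*$-$F_\sigma$ set, hence Borel. Intersecting the two Borel sets gives that $R$ is Borel.

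The main obstacle is handling the quantifier ``for all $x \in U$'' with $U$ open and the requirement strict ($a^*(x) > 0$), since naively this is an uncountable conjunction of open conditions and need not be Borel on the nose; the fix is the convexity reduction to the ball center together with the observation that the offending strictness is absorbed by excluding $a^* = 0$ (a $w^*$-$F_\sigma$ set inside $B_{A^*}$). Once that reduction is in place, both the $\sigma$-compactness of sections and the Borelness of $R$ follow routinely, and this sets up the application of the Arsenin--Kunugui theorem (Theorem~\ref{arskun}) to extract a Borel selector, which is presumably how the $p_n$ will be built.
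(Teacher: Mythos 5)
Your proposal is correct and follows essentially the same route as the paper: both reduce the strict condition on the open ball to non-strict conditions checked on countably many points plus one extra requirement that turns each section into a countable union of $w^{*}$-closed subsets of $B_{A^{*}}$ (the paper uses $a^{*}(a)>0$ at the center $a$, decomposed as $\bigcup_{j}\{a^{*}(a)\ge 1/j\}$; you use $a^{*}\neq 0$, decomposed as $\bigcup_{k}\{\Vert a^{*}\Vert\ge 1/k\}$ via $w^{*}$-lower semicontinuity of the norm), and both handle the condition $a^{*}(x)=0$ on $X$ by Kuratowski--Ryll-Nardzewski selectors combined with the joint continuity of the evaluation on $A\times (B_{A^{*}},w^{*})$. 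The intermediate assertion that $R_{X}$ equals $\{a^{*}\vert_{X}=0,\ a^{*}(x_{0})\ge r\}$ \textquotedblleft up to the boundary\textquotedblright{} is inaccurate (the correct closed condition is $a^{*}(x_{0})\ge r\Vert a^{*}\Vert$), but you immediately supply the correct formulation with the exclusion of $a^{*}=0$, so the argument stands.
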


\begin{proof}
Let $ a $ be the center of $ U $. Since every $ a^{*} \neq 0 $ maps open balls onto open intervals, we have
$$ (X, a^{*}) \in R \quad \Leftrightarrow \quad a^{*}(x) = 0, x \in X, a^{*}(x) \geq 0, x \in U, a^{*}(a) > 0. $$
The dual unit ball $ B_{A^{*}} $ is compact in the $ w^{*} $-topology. As $ a^{*}(a) > 0 $ if and only if $ a^{*}(a) \geq 1/j $ for some $ j \in \mathbb{N} $, it follows that the sections $ R_{X}, X \in \mathcal{SE}(A), $ are $ \sigma $-compact.

Let $ x_{1}, x_{2}, \dots $ be dense in $ U $ and let $ d_{1}, d_{2}, \dots : \mathcal{F}(A) \to A $ be Borel selectors provided by Theorem~\ref{kurryl}. Using the equivalence
$$ (X, a^{*}) \in R \quad \Leftrightarrow \quad a^{*}(d_{n}(X)) = 0, a^{*}(x_{n}) \geq 0, n \in \mathbb{N}, a^{*}(a) > 0 $$
and the continuity of $ (a^{*}, x) \in (B_{A^{*}}, w^{*}) \times A \mapsto a^{*}(x) $, we see that $ R $ is Borel.
\end{proof}

\begin{claim} \label{claim14}
There is a sequence $ s_{1}, s_{2}, \dots $ of Borel mappings $ s_{n} : \mathcal{SE}(A) \to (B_{A^{*}}, w^{*}) $ such that, for every $ X \in \mathcal{SE}(A) $ and $ a \in A \setminus X $, there is $ n \in \mathbb{N} $ with $ s_{n}(X)(a) \neq 0 $ and $ s_{n}(X)(x) = 0 $ for all $ x \in X $.
\end{claim}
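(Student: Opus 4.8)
The claim asks for a countable family of Borel selectors $s_n : \mathcal{SE}(A) \to B_{A^*}$ that, for each subspace $X$ and each $a \notin X$, includes some functional vanishing on $X$ but not at $a$. This is a "separation" statement — Hahn-Banach gives such functionals pointwise, and we need to do it Borel-measurably and countably.

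Key idea: use Claim 13 together with the Arsenin–Kunugui theorem. For each open ball $U$ in $A$ (from a countable basis of balls), Claim 13 says the set $R^U = \{(X,a^*) : a^*|_X = 0, \ a^*|_U > 0\}$ is Borel with $\sigma$-compact sections. By Arsenin–Kunugui, $\pi_{\mathcal{SE}(A)}(R^U)$ is Borel and there is a Borel selector $f_U$ defined on it with $f_U(X) \in R^U_X$. The plan is to enumerate the countably many balls $U_1, U_2, \ldots$, get Borel selectors $f_1, f_2, \ldots$ defined on the respective Borel projection sets $P_1, P_2, \ldots$, and then patch: define $s_n(X) = f_n(X)$ when $X \in P_n$, and $s_n(X) = 0$ (or any fixed constant) otherwise. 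Since $P_n$ is Borel, each $s_n$ is Borel.

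It remains to verify that this family works. Suppose $X \in \mathcal{SE}(A)$ and $a \in A \setminus X$. Since $X$ is closed and $a \notin X$, there is an open ball $U \ni a$ disjoint from $X$; shrinking if necessary we may take $U = U_n$ for some $n$ in our enumeration. By Hahn-Banach there is $a^* \in B_{A^*}$ with $a^*|_X = 0$ and $a^*(a) > 0$; since $a^*$ is linear and continuous and $U_n$ is a ball around (or near) $a$ on which $a^*$ is close to $a^*(a) > 0$ — more carefully, shrink $U_n$ so that $a^*(x) > 0$ for all $x \in U_n$, which is possible because $a^* \ne 0$ maps the ball onto an open interval around $a^*(a)>0$. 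Hence $(X, a^*) \in R^{U_n}$, so $X \in P_n$, so $s_n(X) = f_n(X) \in R^{U_n}_X$, meaning $s_n(X)(x) = 0$ for $x \in X$ and $s_n(X)(x) > 0$ for $x \in U_n$, in particular $s_n(X)(a) > 0 \ne 0$. This is exactly what is required.

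The main obstacle is bookkeeping rather than conceptual: ensuring that the Borel selector from Arsenin–Kunugui is genuinely available (the hypotheses — Borel set with $\sigma$-compact sections in a Polish range — are exactly supplied by Claim 13, since $(B_{A^*}, w^*)$ is compact metrizable hence Polish), and ensuring that the patched maps $s_n$ remain Borel, which follows because each $P_n = \pi_{\mathcal{SE}(A)}(R^{U_n})$ is Borel by Arsenin–Kunugui and a function agreeing with a Borel map on a Borel set and with a constant off it is Borel. One should also make sure the countable family of balls is rich enough to separate any point $a \notin X$ from $X$ after the shrinking step; taking all open balls with rational radii centered at points of a fixed countable dense subset of $A$ suffices, since given $a^*$ with $a^*(a) > 0$ we can find such a ball containing $a$, small enough that $a^*$ stays positive on it.
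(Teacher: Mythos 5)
Your proposal is correct and follows essentially the same route as the paper: apply Claim~\ref{claim13} together with the Arsenin--Kunugui theorem to each ball $U_n$ of a countable basis, extend each selector Borel-measurably (e.g.\ by $0$) off the Borel projection, and verify the separation property via Hahn--Banach. The only cosmetic difference is that you separate the point $a$ from $X$ and then shrink the ball so that the functional stays positive on it, whereas the paper separates the ball $U_n$ from $X$ directly; both verifications are fine.
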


\begin{proof}
Let $ U_{1}, U_{2}, \dots $ be a countable basis of the norm topology of $ A $ consisting of open balls. For each $ n \in \mathbb{N} $, let us consider a set
$$ R_{n} = \big\{ (X, a^{*}) \in \mathcal{SE}(A) \times B_{A^{*}} : a^{*}(x) = 0, x \in X, a^{*}(x) > 0, x \in U_{n} \big\}. $$
By Claim~\ref{claim13} and Theorem~\ref{arskun}, there exists a Borel mapping $ s_{n} : \mathcal{SE}(A) \to (B_{A^{*}}, w^{*}) $ such that $ (R_{n})_{X} \neq \emptyset \Rightarrow s_{n}(X) \in (R_{n})_{X} $ (we consider a Borel extension of the mapping provided by Theorem~\ref{arskun}). Let us check that the mappings $ s_{n} $ work.

Let $ X \in \mathcal{SE}(A) $ and $ a \in A \setminus X $. For some $ n \in \mathbb{N} $, we have $ a \in U_{n} \subset A \setminus X $. By the Hahn-Banach theorem, there exists $ a^{*} \in B_{A^{*}} $ such that $ a^{*}(x) = 0 $ for $ x \in X $ and $ a^{*}(x) > 0 $ for $ x \in U_{n} $. It means that $ (R_{n})_{X} $ is non-empty, and so $ s_{n}(X) \in (R_{n})_{X} $. Therefore, $ s_{n}(X) $ has the desired property.
\end{proof}

\begin{claim} \label{claim15}
There is a sequence $ p_{1}, p_{2}, \dots $ of Borel functions $ p_{n} : \mathcal{SE}(A) \to [0, 1] $ such that {\rm (iii)} and {\rm (iv)} are valid for every $ X \in \mathcal{SE}(A) $.
\end{claim}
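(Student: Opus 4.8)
The plan is to combine the two pieces already constructed: from Claim~\ref{claim12} we get a fixed countable dense set $v_1, v_2, \dots$ in $[0,1]$ generating a topology in which $IA$ is closed, and from Claim~\ref{claim14} we get Borel maps $s_n : \mathcal{SE}(A) \to (B_{A^*}, w^*)$ which, for each $X$, separate every point of $A \setminus X$ from $X$ by a functional vanishing on $X$. The key observation linking the two is that a functional $a^* \in B_{A^*}$ vanishing on $X$ corresponds, under the surjection $\varphi : [0,1] \to (B_{A^*}, w^*)$, to a point $t$ of $[0,1]$ at which every element $Ix$ of $IX$ vanishes; equivalently, the point-evaluation at such a $t$ is a linear functional on $C([0,1])$ that is zero on $IX$. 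So if we enrich the topology on $[0,1]$ by adjoining, besides the fixed points $v_k$, also points $t$ with $\varphi(t) = s_n(X)$ for each $n$, then $IX$ becomes closed: a function $f \in C([0,1]) \setminus IX$ either already lies outside $IA$ (and is separated using the $v_k$'s, by Claim~\ref{claim12}), or lies in $IA$, say $f = Ia$ with $a \in A \setminus X$, in which case Claim~\ref{claim14} supplies an $n$ with $s_n(X)(a) \neq 0$ and $s_n(X)|_X = 0$, so that evaluation at a preimage point of $s_n(X)$ separates $f = Ia$ from $IX$.

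The remaining work is to make the choice of such preimage points Borel in $X$. I would introduce the set
$$ Q = \big\{ (X, n, t) \in \mathcal{SE}(A) \times \mathbb{N} \times [0,1] : \varphi(t) = s_n(X) \big\}, $$
which is Borel (the map $(X, t) \mapsto (\varphi(t), s_n(X))$ is Borel into $(B_{A^*}, w^*)^2$, the diagonal is closed, and $n$ ranges over a countable set), and whose sections in the $t$-variable are compact and non-empty (surjectivity of $\varphi$ plus continuity). Applying Theorem~\ref{arskun}, or simply the Kuratowski--Ryll-Nardzewski-type selection available for closed-valued Borel maps, I obtain for each $n$ a Borel function $q_n : \mathcal{SE}(A) \to [0,1]$ with $\varphi(q_n(X)) = s_n(X)$. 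Now I interleave the constant sequence $(v_k)$ and the sequences $(q_n(X))$ into a single Borel sequence $p_1(X), p_2(X), \dots$; since the $v_k$ are already dense, property (iii) holds automatically regardless of where the $q_n(X)$ fall, and property (iv) holds by the separation argument of the previous paragraph.

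The main obstacle I anticipate is the Borel measurability of the selection $q_n$: one must be careful that $\varphi$, being only continuous (not open or injective), still allows a Borel choice of preimage. This is handled cleanly because the preimage sections $\varphi^{-1}(\{s_n(X)\})$ are compact, so either Arsenin--Kunugui (Theorem~\ref{arskun}) applies directly to the Borel set $Q$ viewed inside $\mathcal{SE}(A) \times [0,1]$ (after noting its sections are even compact, hence $\sigma$-compact), or one invokes a standard uniformization for Borel sets with compact sections; in either case the measurability of $X \mapsto s_n(X)$ from Claim~\ref{claim14} is exactly what feeds the hypothesis. A minor secondary point is bookkeeping: the interleaving must be arranged so that all the $p_n(X)$ are genuine elements of $[0,1]$ and the union of $\{v_k\}$ with $\{q_n(X) : n\}$ is what generates the topology in (iv); but since adding points to a topology only makes more sets closed, and $IA \supset IX$ is already closed in the topology generated by the $v_k$ alone, closedness of $IX$ is preserved and refined exactly as needed.
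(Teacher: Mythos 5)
Your proposal is correct and follows essentially the same route as the paper: combine the fixed dense sequence from Claim~\ref{claim12} with Borel-chosen preimage points of the separating functionals $s_n(X)$ under $\varphi$, then interleave and run the same two-case separation argument for (iv). The only difference is the selection step: the paper simply takes $q_n(X) = \min \varphi^{-1}(s_n(X))$ and checks Borelness directly (since $\{a^* : \min\varphi^{-1}(a^*) \le u\} = \varphi([0,u])$ is compact), whereas you invoke Arsenin--Kunugui on the Borel graph with compact sections --- which is also valid, just slightly heavier machinery.
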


\begin{proof}
Let a sequence of numbers $ v_{1}, v_{2}, \dots $ be given by Claim~\ref{claim12} and a sequence of Borel mappings $ s_{1}, s_{2}, \dots $ be given by Claim~\ref{claim14}. Let
$$ q_{n}(X) = \min \varphi^{-1}(s_{n}(X)), \quad n \in \mathbb{N}, \; X \in \mathcal{SE}(A). $$
Let us show that the function $ q_{n} $ is Borel for every $ n \in \mathbb{N} $. It is sufficient to check that the mapping $ a^{*} \mapsto \min \varphi^{-1}(a^{*}) $ is Borel from $ (B_{A^{*}}, w^{*}) $ into $ [0, 1] $. For $ u \in [0, 1] $, the set $ \{ a^{*} \in B_{A^{*}} : \min \varphi^{-1}(a^{*}) \leq u \} = \varphi([0, u]) $ is compact, and thus Borel.

Further, let us show that, for every $ X \in \mathcal{SE}(A) $, the subspace $ IX $ is closed in the topology generated by the points $ q_{1}(X), q_{2}(X), \dots $ and $ v_{1}, v_{2}, \dots $. Given $ f \in C([0, 1]) \setminus IX $, there are two possibilities. If $ f \notin IA $, then a property of the sequence $ v_{1}, v_{2}, \dots $ guarantees that $ f $ does not belong to the closure of $ IA $ (and of $ IX $ in particular) in the considered topology. If $ f \in IA $, then we choose $ a \in A $ with $ Ia = f $. Necessarily, $ a \notin X $, and thus there is $ n \in \mathbb{N} $ with $ s_{n}(X)(a) \neq 0 $ and $ s_{n}(X)(x) = 0 $ for all $ x \in X $. We have
$$ f(q_{n}(X)) = (Ia)(q_{n}(X)) = \varphi(q_{n}(X))(a) = s_{n}(X)(a) \neq 0, $$
while
$$ (Ix)(q_{n}(X)) = \varphi(q_{n}(X))(x) = s_{n}(X)(x) = 0, \quad x \in X. $$
Hence, $ f $ does belong to the closure of $ IX $ in the considered topology.

Now, if we define
$$ p_{2n-1}(X) = q_{n}(X), \quad p_{2n}(X) = v_{n}, \quad n \in \mathbb{N}, \; X \in \mathcal{SE}(A), $$
then, for every $ X \in \mathcal{SE}(A) $, conditions (iii) and (iv) are valid.
\end{proof}

To finish the proof of Lemma~\ref{lemma1}, it remains to realize that a sequence from Claim~\ref{claim15} can be modified to satisfy (i) and (ii) as well. If we define $ p'_{1}(X) = 0, p'_{2}(X) = 1 $ and $ p'_{n}(X) = p_{m}(X) $ where $ m $ is the least natural number such that $ | \{ 0, 1, p_{1}(X), \dots, p_{m}(X) \} | = n $, it is easy to check that these functions are Borel and satisfy (i)--(iv).

\section{Second lemma}

\begin{lemma} \label{lemma2}
Let $ A $ be a separable Banach space. Then there exist a separable Banach space $ Z $, an isometry $ J : A \to Z $, a collection $ \{ \Vert \cdot \Vert^{X} : X \in \mathcal{SE}(A) \} $ of norms on $ Z $ and a system $ \{ Q^{X}_{n} : X \in \mathcal{SE}(A), n \in \mathbb{N} \} $ of projections on $ Z $ such that the following properties are valid for every $ X \in \mathcal{SE}(A) $:

{\rm (I)} $ \Vert z \Vert \leq \Vert z \Vert^{X} \leq 2 \Vert z \Vert $ for $ z \in Z $,

{\rm (II)} $ \Vert z \Vert = \Vert z \Vert^{X} $ if and only if $ z \in JX $,

{\rm (III)} the sequence $ Q^{X}_{1}, Q^{X}_{2}, \dots $ is the sequence of partial sum operators associated with a basis of $ Z $ which is monotone in the sense that $ \Vert Q^{X}_{n} \Vert \leq 1 $ and $ \Vert Q^{X}_{n} \Vert^{X} \leq 1 $.

Moreover,

{\rm (IV)} the mapping $ (X, z) \mapsto Q^{X}_{n}z $ is Borel from $ \mathcal{SE}(A) \times Z $ into $ Z $ for every $ n \in \mathbb{N} $,

{\rm (V)} the function $ (X, z) \mapsto \Vert z \Vert^{X} $ is Borel from $ \mathcal{SE}(A) \times Z $ into $ \mathbb{R} $.
\end{lemma}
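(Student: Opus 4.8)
The plan is to feed Lemma~\ref{lemma1} into a level-by-level renorming of $C([0,1])$. Apply Lemma~\ref{lemma1} to get the isometry $I\colon A\to C([0,1])$ and the Borel functions $p_{n}\colon\mathcal{SE}(A)\to[0,1]$, and put $Z=C([0,1])$ with its usual norm $\Vert\cdot\Vert$ and $J=I$. Fix $X\in\mathcal{SE}(A)$. By (i) and (ii) the sequence $p_{1}(X),p_{2}(X),\dots$ consists of distinct points including the endpoints $0,1$, so the operators $Q^{X}_{n}$ of piecewise-linear interpolation at the nodes $p_{1}(X),\dots,p_{n}(X)$ are well-defined norm-one projections on $C([0,1])$; by (iii) they are the partial-sum operators of a monotone Schauder basis $e^{X}_{1},e^{X}_{2},\dots$ of $Z$. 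This already gives $\Vert Q^{X}_{n}\Vert\le1$ and, since the $p_{n}$ are Borel and piecewise-linear interpolation depends continuously on the nodes and on the interpolated values, property (IV).

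The part of Lemma~\ref{lemma1} that does the real work is (iv). Because the point evaluations $\delta_{p_{n}(X)}$ generate the topology in which $IX$ is closed, one obtains the layer-wise description
\[
z\in IX \iff Q^{X}_{n}z\in Q^{X}_{n}(IX)\ \text{for every }n,
\]
where $Q^{X}_{n}(IX)$ is a finite-dimensional, hence closed, subspace of the range $V^{X}_{n}=Q^{X}_{n}Z$ (which, with $\Vert\cdot\Vert$, is just $\ell^{n}_{\infty}$). Moreover, if $z\notin IX$ then $\liminf_{n}\operatorname{dist}(Q^{X}_{n}z,Q^{X}_{n}(IX))>0$: by (iv) some finite family of the evaluations $\delta_{p_{i}(X)}$ already separates $z$ from $IX$, and this separation persists at all higher levels.

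On each $V^{X}_{n}$ I would build a norm $\Vert\cdot\Vert^{X}_{n}$ by recursion on $n$, extending $\Vert\cdot\Vert^{X}_{n-1}$ from $V^{X}_{n-1}$ to $V^{X}_{n}=V^{X}_{n-1}\oplus\mathbb{R}e^{X}_{n}$ in a strictly convex way, arranged so that: (a) $\Vert\cdot\Vert\le\Vert\cdot\Vert^{X}_{n}\le2\Vert\cdot\Vert$; (b) $\Vert\cdot\Vert^{X}_{n}$ coincides with $\Vert\cdot\Vert$ exactly on $Q^{X}_{n}(IX)$, and in fact dominates $\Vert\cdot\Vert+c\operatorname{dist}(\cdot,Q^{X}_{n}(IX))$ for a fixed $c>0$; (c) the norms are compatible across levels and monotone under the truncations $Q^{X}_{m}$, $m\le n$; (d) the unit sphere of $\Vert\cdot\Vert^{X}_{n}$ contains no segment lying outside $Q^{X}_{n}(IX)$. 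Then I would set $\Vert z\Vert^{X}=\sup_{n}\Vert Q^{X}_{n}z\Vert^{X}_{n}$. Property (I) and the bounds in (a) are immediate; (V) follows from (a) together with a Borel-measurable choice, via Theorem~\ref{kurryl}, of dense sequences in the subspaces $Q^{X}_{n}(IX)$; (II) follows from (b), the $\liminf$ estimate, and the convergence $\Vert Q^{X}_{n}z\Vert\to\Vert z\Vert$; (III) follows from (c); and (d) passes to the supremum in the form: every line segment in the unit sphere of $(Z,\Vert\cdot\Vert^{X})$ must lie in $JX$, which simultaneously yields Theorem~\ref{thmmain2}.

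The main obstacle is the construction of the level norms $\Vert\cdot\Vert^{X}_{n}$, i.e.\ getting (a)--(d) together. The tension is that (b) forces $\Vert\cdot\Vert^{X}$ to inherit every flat piece of $\Vert\cdot\Vert$ that runs along $JX$, while (c)--(d) demand monotonicity under the interpolation projections and strict convexity elsewhere --- yet the $Q^{X}_{n}$ do not preserve $JX$, so a naive penalization such as $\Vert\cdot\Vert+c\operatorname{dist}(\cdot,IX)$ is not monotone under the basis, and $Q^{X}_{m}(IX)\not\subseteq Q^{X}_{n}(IX)$ in general, so the level-$n$ penalizations are not automatically compatible. Resolving this forces the recursive extension to be tailored to the position of each new node $p_{n}(X)$ relative to $IX$ --- in particular to whether the new coordinate functional vanishes on $IX$ --- and it is exactly property (iv) of Lemma~\ref{lemma1} that makes a globally consistent choice possible. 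Carrying this out while keeping everything Borel in $X$ is a further, routine-but-not-free, part of the argument, handled with the selection theorems of Section~2.
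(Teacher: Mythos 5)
Your outline stops exactly where the lemma's real content begins: the construction of the level norms satisfying your (a)--(d) is postponed as ``the main obstacle,'' so the proposal does not yet contain a proof. More seriously, the framework you fix at the outset --- $Z=C([0,1])$, $J=I$, and $Q^{X}_{n}=P^{X}_{n}$ the piecewise-linear interpolation projections at the nodes $p_{1}(X),\dots,p_{n}(X)$ --- cannot support \emph{any} norm with properties (I)--(III), no matter how the recursion is tailored. Indeed, suppose (I)--(III) held in this setting, take $g\in IX$ and $n$ with $P^{X}_{n}g\notin IX$. By (II), $\Vert g\Vert^{X}=\Vert g\Vert$ and (using (I)) $\Vert P^{X}_{n}g\Vert^{X}>\Vert P^{X}_{n}g\Vert$, while the second half of (III) gives $\Vert P^{X}_{n}g\Vert^{X}\le\Vert g\Vert^{X}=\Vert g\Vert$; hence necessarily $\Vert P^{X}_{n}g\Vert<\Vert g\Vert$ strictly. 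But interpolation projections do not strictly decrease the sup-norm of such $g$ in general: whenever $g\in IX$ attains its maximum modulus at one of the first $n$ nodes (and $0,1$ are always nodes), $\Vert P^{X}_{n}g\Vert=\Vert g\Vert$, while $P^{X}_{n}g$ is a piecewise-linear function that has no reason to lie in $IX$. So (II) and (III) collide in your setting; this is the same tension you notice between your conditions (b) and (c), but it is not a technical nuisance to be engineered away level by level --- it is an obstruction to the choice of $Z$, $J$ and $Q^{X}_{n}$ itself.

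The paper's proof is built precisely around this point, and it resolves it by enlarging the space rather than by a recursive renorming: it takes $Z=\ell_{2}(C([0,1]))$, $J=UI$ with $Uf=\frac{\sqrt{3}}{2}\big(f,\frac{1}{2}f,\frac{1}{4}f,\dots\big)$, and projections $Q^{X}_{i}$ obtained by staggering the $P^{X}_{n}$ across the coordinates. The payoff is the strict estimate $\Vert Q^{X}_{i}U\Vert<1$ (each $Q^{X}_{i}$ annihilates all but finitely many coordinates of $Uf$), which restores exactly the strict norm decrease that fails for $P^{X}_{n}$ on $C([0,1])$. The norm $\Vert\cdot\Vert^{X}$ is then defined in one stroke by declaring its unit ball to be $\Omega^{X}=\overline{\mathrm{co}}\big(\frac{1}{2}B_{\ell_{2}(C([0,1]))}\cup\bigcup_{i}Q^{X}_{i}UIB_{X}\big)$: property (III) is automatic because $\Omega^{X}$ is invariant under every $Q^{X}_{i}$, and property (II) is proved by a Hahn--Banach separation argument that combines the layer-wise criterion (your displayed equivalence, which is correct and corresponds to the paper's Claims 2.2 and 2.4) with the bound $\Vert Q^{X}_{j}U\Vert\le 1-\varepsilon$; Borelness (IV), (V) follows from Borelness of $(X,f)\mapsto P^{X}_{n}f$ and a countable description of the convex hull. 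Your use of Lemma~\ref{lemma1} and the separation-at-finite-level observation are the right ingredients, but without the passage to $\ell_{2}(C([0,1]))$ (or some equivalent device making the basis projections strict contractions on the embedded copy of $A$), the plan as stated cannot be completed; the strict-convexity clause (d) is likewise not part of this lemma --- in the paper it is obtained afterwards by combining $\Vert\cdot\Vert$, $\Vert\cdot\Vert^{X}$ and a strictly convex seminorm via the norm $\varrho$ on $\mathbb{R}^{3}$.
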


The proof of this lemma consists of four parts. There are some analogies with a construction by Ghoussoub, Maurey and Schachermayer \cite{gms} and its parameterized version by Bossard \cite{bossard1} (see also \cite[Chapter~5]{dodostopics}).

\ding{182}
Let us fix an isometry $ I : A \to C([0, 1]) $ and a sequence $ p_{1}, p_{2}, \dots $ of Borel functions $ p_{n} : \mathcal{SE}(A) \to [0, 1] $ satisfying properties (i)--(iv) from Lemma~\ref{lemma1}. For every $ X \in \mathcal{SE}(A) $, we define a sequence of projections $ P^{X}_{n} : C([0, 1]) \to C([0, 1]) $. Let $ (P^{X}_{1}f)(t) = f(0) $ for every $ f \in C([0, 1]) $ and $ t \in [0, 1] $. Given $ n \geq 2 $ and $ f \in C([0, 1]) $, let $ P^{X}_{n}f $ be the piecewise linear function which has the same values in $ p_{1}(X), \dots, p_{n}(X) $ as $ f $ and is linear elsewhere.

Using properties (i), (ii) and (iii), we can easily verify the following properties:
\begin{itemize}
\item $ \Vert P^{X}_{n}f \Vert \leq \Vert f \Vert $,
\item $ P^{X}_{n}P^{X}_{m} = P^{X}_{m}P^{X}_{n} = P^{X}_{\min \{ m, n \} } $,
\item $ P^{X}_{n} C([0, 1]) $ has dimension $ n $,
\item $ P^{X}_{n}f \to f $ as $ n \to \infty$.
\end{itemize}
Due to these properties, the sequence $ P^{X}_{1}, P^{X}_{2}, \dots $ is the sequence of partial sum operators associated with a monotone basis of $ C([0, 1]) $.

\begin{claim} \label{claim21}
For every $ n \in \mathbb{N} $, the mapping $ (X, f) \mapsto P^{X}_{n}f $ is Borel from $ \mathcal{SE}(A) \times C([0, 1]) $ into $ C([0, 1]) $.
\end{claim}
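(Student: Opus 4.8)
The plan is to reduce the claim to the Borel measurability of the point evaluations $(X,f)\mapsto (P^{X}_{n}f)(t)$ and then to read this off from the explicit piecewise-linear description of $P^{X}_{n}$. For the reduction I would use the standard fact that a map $g$ from a standard Borel space into $C([0,1])$, with its sup-norm Polish topology, is Borel as soon as $X\mapsto g(X)(q)$ is Borel for every rational $q\in[0,1]$: for $f_{0}\in C([0,1])$ and $r>0$ one has $g(X)\in B(f_{0},r)$ precisely when $|g(X)(q)-f_{0}(q)|\leq r'$ for all rational $q$ and some rational $r'<r$, so the preimage of every open ball — hence, by separability, of every open set — is Borel. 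Thus it suffices to prove that, for each $n$ and each $t\in[0,1]$, the function $(X,f)\mapsto (P^{X}_{n}f)(t)$ is Borel on $\mathcal{SE}(A)\times C([0,1])$.

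The case $n=1$ is immediate, since $(P^{X}_{1}f)(t)=f(0)$ does not depend on $X$ and is continuous in $f$. For $n\geq 2$ I would fix $t$ and split $\mathcal{SE}(A)$ according to the order type of $(p_{1}(X),\dots,p_{n}(X))$. By property (ii) of Lemma~\ref{lemma1} these numbers are pairwise distinct, so for each $X$ there is a unique permutation $\pi$ of $\{1,\dots,n\}$ with $p_{\pi(1)}(X)<\dots<p_{\pi(n)}(X)$, and by property (i) the extreme values are $p_{\pi(1)}(X)=0$ and $p_{\pi(n)}(X)=1$. For a permutation $\pi$ and $k\in\{1,\dots,n-1\}$ I set
$$ B_{\pi,k}=\big\{X\in\mathcal{SE}(A): p_{\pi(1)}(X)<\dots<p_{\pi(n)}(X),\ p_{\pi(k)}(X)\leq t\leq p_{\pi(k+1)}(X)\big\}, $$
which is Borel because the $p_{i}$ are Borel; these finitely many sets cover $\mathcal{SE}(A)$ as $t\in[0,1]=[p_{\pi(1)}(X),p_{\pi(n)}(X)]$, and on $B_{\pi,k}$ the value $(P^{X}_{n}f)(t)$ equals the linear interpolation
$$ \frac{p_{\pi(k+1)}(X)-t}{p_{\pi(k+1)}(X)-p_{\pi(k)}(X)}\,f\big(p_{\pi(k)}(X)\big)+\frac{t-p_{\pi(k)}(X)}{p_{\pi(k+1)}(X)-p_{\pi(k)}(X)}\,f\big(p_{\pi(k+1)}(X)\big), $$
the definition being unambiguous since on an overlap $t=p_{\pi(j)}(X)$ both relevant formulas reduce to $f(t)$. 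Here the denominators never vanish by (ii); $X\mapsto p_{i}(X)$ is Borel; and $(X,f)\mapsto f(p_{i}(X))$ is Borel, being the composition of the Borel map $(X,f)\mapsto (f,p_{i}(X))$ with the continuous evaluation $(f,s)\mapsto f(s)$ on $C([0,1])\times[0,1]$. Since sums, products and quotients of Borel functions are Borel, $(X,f)\mapsto (P^{X}_{n}f)(t)$ is Borel on each $B_{\pi,k}$, and hence on all of $\mathcal{SE}(A)\times C([0,1])$.

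I do not expect a genuine obstacle here. The only points requiring care are the reduction to point evaluations in the first step and the verification that the pieces $B_{\pi,k}$ are Borel and exhaust $\mathcal{SE}(A)$ — which is precisely where properties (i) and (ii) of Lemma~\ref{lemma1} are used (distinctness of the $p_{i}(X)$, and $0,1$ being realized among them). Everything else is the routine stability of the Borel class under arithmetic operations and composition with continuous maps.
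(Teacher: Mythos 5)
Your proof is correct, but it takes a different route from the paper's. The paper disposes of the claim in two lines: for $n \leq 2$ the operator $P^{X}_{n}$ does not depend on $X$ at all (because $p_{1} \equiv 0$, $p_{2} \equiv 1$), and for $n \geq 3$ the map $(X,f) \mapsto P^{X}_{n}f$ is written as the composition of the Borel map $(X,f) \mapsto (p_{3}(X),\dots,p_{n}(X),f)$ with the (jointly) continuous map sending $(x_{3},\dots,x_{n},f)$ to the piecewise linear interpolant of $f$ at the nodes $0,1,x_{3},\dots,x_{n}$; Borelness then follows since a continuous map composed with a Borel map is Borel. You instead reduce Borel measurability of a $C([0,1])$-valued map to Borel measurability of the rational point evaluations $(X,f)\mapsto (P^{X}_{n}f)(q)$, and compute these evaluations by an explicit interpolation formula on a finite Borel partition of $\mathcal{SE}(A)$ indexed by the order type of $(p_{1}(X),\dots,p_{n}(X))$ and the position of the evaluation point among the nodes. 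The trade-off: the paper's factorization is shorter but quietly relies on the joint continuity of the interpolation map in the nodes as well as in $f$ (a routine but nontrivial check, using uniform continuity of $f$ and the fact that interpolation at fixed nodes is norm-one), whereas your argument is longer and more pedestrian but avoids that continuity verification entirely, invoking only the Borelness of the $p_{i}$, the continuity of the evaluation $(f,s)\mapsto f(s)$, and the stability of Borel functions under arithmetic and countable Borel covers; it also makes the use of properties (i) and (ii) of Lemma~\ref{lemma1} explicit (nonvanishing denominators, exhaustion of $[0,1]$ by the node intervals). Both arguments are complete and correct.
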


\begin{proof}
For $ n \leq 2 $, the assertion is clear, as $ P^{X}_{n} $ does not depend on $ X $. For $ n \geq 3 $, the considered mapping is the composition of the Borel mapping $ (X, f) \mapsto (p_{3}(X), \dots, p_{n}(X), f) $ and the continuous mapping which maps $ (x_{3}, \dots, x_{n}, f) $ to the piecewise linear function which has the same values in $ 0, 1, x_{3}, \dots, x_{n} $ as $ f $ and is linear elsewhere.
\end{proof}

\begin{claim} \label{claim22}
Let $ X \in \mathcal{SE}(A) $ and $ f \in C([0, 1]) $. If $ f \notin IX $, then there is $ n \in \mathbb{N} $ such that $ P^{X}_{n}f \notin P^{X}_{n}IX $.
\end{claim}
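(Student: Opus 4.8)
The plan is to read off Claim~\ref{claim22} directly from property~(iv) of Lemma~\ref{lemma1}. Fix $X\in\mathcal{SE}(A)$ and $f\in C([0,1])$ with $f\notin IX$, and let $\tau$ denote the topology on $C([0,1])$ generated by the points $p_{1}(X),p_{2}(X),\dots$, i.e.\ the topology of pointwise convergence on the set $\{p_{k}(X):k\in\mathbb{N}\}$. By~(iv) the subspace $IX$ is $\tau$-closed, so $C([0,1])\setminus IX$ is $\tau$-open, and hence $f$ has a basic $\tau$-neighbourhood missing $IX$: there are indices $k_{1},\dots,k_{j}$ and $\varepsilon>0$ with
$$ \big\{ g\in C([0,1]) : |g(p_{k_{i}}(X))-f(p_{k_{i}}(X))|<\varepsilon,\ 1\le i\le j \big\}\cap IX=\emptyset . $$

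Now I would set $n=\max\{2,k_{1},\dots,k_{j}\}$. Recall that for $n\ge 2$ the function $P^{X}_{n}g$ is, by construction in~\ding{182}, the piecewise linear interpolant of $g$ at the nodes $p_{1}(X),\dots,p_{n}(X)$; in particular $(P^{X}_{n}g)(p_{m}(X))=g(p_{m}(X))$ for $1\le m\le n$, hence in particular at each of $p_{k_{1}}(X),\dots,p_{k_{j}}(X)$. Arguing by contradiction, suppose $P^{X}_{n}f\in P^{X}_{n}IX$, say $P^{X}_{n}f=P^{X}_{n}(Ix)$ for some $x\in X$. Evaluating at $p_{k_{i}}(X)$ and using the interpolation property twice gives
$$ (Ix)(p_{k_{i}}(X))=(P^{X}_{n}Ix)(p_{k_{i}}(X))=(P^{X}_{n}f)(p_{k_{i}}(X))=f(p_{k_{i}}(X)),\qquad 1\le i\le j . $$
Thus $|(Ix)(p_{k_{i}}(X))-f(p_{k_{i}}(X))|=0<\varepsilon$ for every $i$, so $Ix$ lies in the basic neighbourhood of $f$ displayed above and also in $IX$, contradicting that this neighbourhood is disjoint from $IX$. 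Therefore $P^{X}_{n}f\notin P^{X}_{n}IX$, which is the desired conclusion.

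I do not expect a real obstacle here. The only two points needing care are, first, the identification of ``the topology generated by the points $p_{k}(X)$'' with pointwise convergence on that set, so that a $\tau$-open set is a union of finite-coordinate boxes and a single index $n$ suffices; and second, ensuring that the interpolation nodes $p_{1}(X),\dots,p_{n}(X)$ of $P^{X}_{n}$ actually contain all of $p_{k_{1}}(X),\dots,p_{k_{j}}(X)$, which is why one takes the maximum (and includes $2$, so that $P^{X}_{n}$ is genuinely the interpolation operator and not the constant operator $P^{X}_{1}$). Properties~(i)--(iii) of Lemma~\ref{lemma1} play no role in this particular claim; they are needed only to see that the $P^{X}_{n}$ are partial sum operators of a monotone basis.
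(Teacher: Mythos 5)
Your proof is correct and follows essentially the same route as the paper: extract from property (iv) a basic neighbourhood of $f$ determined by finitely many of the points $p_{k}(X)$ and an $\varepsilon>0$ disjoint from $IX$, then use the fact that $P^{X}_{n}$ preserves values at $p_{1}(X),\dots,p_{n}(X)$ to conclude $P^{X}_{n}f\notin P^{X}_{n}IX$. The only difference is cosmetic (the paper takes the indices to be $1,\dots,n$ directly rather than passing to a maximum, and argues directly rather than by contradiction).
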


\begin{proof}
Property (iv) provides a neighborhood $ W \subset C([0, 1]) \setminus IX $ of $ f $ of the form
$$ W = \big\{ g \in C([0, 1]) : |g(p_{k}(X)) - f(p_{k}(X))| < \varepsilon, 1 \leq k \leq n \big\} $$
for a large enough $ n \in \mathbb{N} $ and a small enough $ \varepsilon > 0 $. It means that every $ g \in IX $ satisfies $ |g(p_{k}(X)) - f(p_{k}(X))| \geq \varepsilon $ for some $ 1 \leq k \leq n $. Since $ (P^{X}_{n}g)(p_{k}(X)) = g(p_{k}(X)) $ and $ (P^{X}_{n}f)(p_{k}(X)) = f(p_{k}(X)) $, it follows that $ P^{X}_{n}g \neq P^{X}_{n}f $.
\end{proof}

\ding{183}
For every $ X \in \mathcal{SE}(A) $, we define a sequence of projections $ Q^{X}_{i} : \ell_{2}(C([0, 1])) \to \ell_{2}(C([0, 1])) $. For $ \mathbf{f} = (f_{1}, f_{2}, \dots) \in \ell_{2}(C([0, 1])) $, let
\begin{align*}
Q^{X}_{1}\mathbf{f} & = (P^{X}_{1}f_{1}, 0, 0, 0, \dots), \\
Q^{X}_{2}\mathbf{f} & = (P^{X}_{2}f_{1}, 0, 0, 0, \dots), \\
Q^{X}_{3}\mathbf{f} & = (P^{X}_{2}f_{1}, P^{X}_{1}f_{2}, 0, 0, \dots), \\
Q^{X}_{4}\mathbf{f} & = (P^{X}_{3}f_{1}, P^{X}_{1}f_{2}, 0, 0, \dots), \\
Q^{X}_{5}\mathbf{f} & = (P^{X}_{3}f_{1}, P^{X}_{2}f_{2}, 0, 0, \dots), \\
Q^{X}_{6}\mathbf{f} & = (P^{X}_{3}f_{1}, P^{X}_{2}f_{2}, P^{X}_{1}f_{3}, 0, \dots), \\
Q^{X}_{7}\mathbf{f} & = (P^{X}_{4}f_{1}, P^{X}_{2}f_{2}, P^{X}_{1}f_{3}, 0, \dots),
\end{align*}
and so on. In this way, $ Q^{X}_{1}, Q^{X}_{2}, \dots $ is the sequence of partial sum operators associated with a monotone basis of $ \ell_{2}(C([0, 1])) $.

Further, let us define an isometry
$$ U : C([0, 1]) \to \ell_{2}(C([0, 1])), \quad f \mapsto \frac{\sqrt{3}}{2} \Big( f, \frac{1}{2}f, \frac{1}{4}f, \dots \Big). $$
This is an isometry indeed, as
$$ \Vert Uf \Vert^{2} = \frac{3}{4} \Big( \Vert f \Vert^{2} + \frac{1}{4} \Vert f \Vert^{2} + \dots \Big) = \Vert f \Vert^{2}, \quad f \in C([0, 1]). $$

\begin{claim} \label{claim23}
For every $ i \in \mathbb{N} $, the mapping $ (X, \mathbf{f}) \mapsto Q^{X}_{i}\mathbf{f} $ is Borel from $ \mathcal{SE}(A) \times \ell_{2}(C([0, 1])) $ into $ \ell_{2}(C([0, 1])) $.
\end{claim}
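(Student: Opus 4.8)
The plan is to reduce the statement to \textbf{Claim~\ref{claim21}} by decomposing $ Q^{X}_{i} $ into finitely many coordinate-wise applications of the operators $ P^{X}_{n} $, together with the fixed coordinate projections of $ \ell_{2}(C([0,1])) $, none of which depend on $ X $ in any complicated way. Concretely, the definition of $ Q^{X}_{i} $ shows that for each fixed $ i $ there is a finite integer $ k = k(i) $ and integers $ m_{1}, \dots, m_{k} $ (depending only on $ i $, not on $ X $) such that
$$ Q^{X}_{i}(f_{1}, f_{2}, \dots) = (P^{X}_{m_{1}}f_{1}, \dots, P^{X}_{m_{k}}f_{k}, 0, 0, \dots). $$
So I would first record this explicit combinatorial description of the indices $ k(i), m_{1}(i), \dots, m_{k}(i) $ — this is the bookkeeping that matches the table in \ding{183} — and then observe that $ (X, \mathbf{f}) \mapsto Q^{X}_{i}\mathbf{f} $ is obtained by composing (a) the continuous coordinate maps $ \mathbf{f} \mapsto f_{j} $, $ 1 \le j \le k $, (b) the maps $ (X, f_{j}) \mapsto P^{X}_{m_{j}}f_{j} $, which are Borel by Claim~\ref{claim21}, and (c) the continuous inclusion $ C([0,1])^{k} \to \ell_{2}(C([0,1])) $ sending a $ k $-tuple to the sequence padded with zeros.

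The remaining point is a routine measure-theoretic one: a map into a product (or here, into $ \ell_{2}(C([0,1])) $, which carries the Borel structure generated by the coordinate functionals) is Borel if and only if each coordinate is Borel, and a finite combination of Borel maps with a continuous map is Borel. Since each coordinate of $ Q^{X}_{i}\mathbf{f} $ is either identically $ 0 $ or of the form $ P^{X}_{m_{j}}f_{j} $ with $ \mathbf{f} \mapsto f_{j} $ continuous, Claim~\ref{claim21} gives that each coordinate is a Borel function of $ (X, \mathbf{f}) $, and hence $ (X, \mathbf{f}) \mapsto Q^{X}_{i}\mathbf{f} $ is Borel.

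I do not expect a genuine obstacle here; the proof is essentially a reduction to the already-established Claim~\ref{claim21}. If anything, the only mild subtlety is being careful that the Borel structure on $ \ell_{2}(C([0,1])) $ (as a Polish space) agrees with the one generated by the coordinate projections onto the copies of $ C([0,1]) $ — this is standard since $ \ell_{2}(C([0,1])) $ is separable and the coordinate projections are continuous and jointly separate points, so the coordinate-generated $ \sigma $-algebra coincides with the Borel $ \sigma $-algebra. With that in hand, the proof is one short paragraph spelling out the composition described above.
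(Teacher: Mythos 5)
Your proposal is correct and is essentially the paper's own argument: the paper's proof is just the one-line observation that the claim follows from Claim~\ref{claim21} and the definition of $ Q^{X}_{i} $, and your write-up merely spells out that reduction (finitely many coordinates of the form $ P^{X}_{m_{j}}f_{j} $, composed with the continuous coordinate maps and the zero-padded inclusion). Note that since you route the argument through the continuous inclusion $ C([0,1])^{k} \to \ell_{2}(C([0,1])) $, the worry about the coordinate-generated $ \sigma $-algebra on $ \ell_{2}(C([0,1])) $ is not even needed.
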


\begin{proof}
This follows from Claim~\ref{claim21} and the definition of $ Q^{X}_{i} $.
\end{proof}

\begin{claim} \label{claim24}
Let $ X \in \mathcal{SE}(A) $ and $ \mathbf{f} \in \ell_{2}(C([0, 1])) $. If $ \mathbf{f} \notin UIX $, then there is $ i \in \mathbb{N} $ such that $ Q^{X}_{i}\mathbf{f} \notin Q^{X}_{i}UIX $.
\end{claim}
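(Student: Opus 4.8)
The plan is to split according to whether $ \mathbf{f} $ lies in the range of the isometry $ U $. If it does, the statement reduces directly to Claim~\ref{claim22}; if it does not, then $ \mathbf{f} $ violates one of the ``diagonal'' relations $ f_{k} = 2^{-(k-1)} f_{1} $ cutting out $ U(C([0, 1])) $, and I would exploit this by hand. In both situations $ i $ gets chosen so that $ Q^{X}_{i} $ truncates the relevant coordinate at a sufficiently high level, and then a short comparison of coordinates produces the contradiction; throughout one uses that inside $ Q^{X}_{i} $ the first coordinate carries the largest truncation level, so the value of a candidate $ g \in IX $ on every active coordinate is already pinned down by its value on the first one.

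Suppose first that $ \mathbf{f} = Uf $ for some $ f \in C([0, 1]) $. Since $ U $ is injective and $ \mathbf{f} \notin UIX $, we have $ f \notin IX $, so Claim~\ref{claim22} supplies $ n \in \mathbb{N} $ with $ P^{X}_{n} f \notin P^{X}_{n} IX $; because $ P^{X}_{n} P^{X}_{m} = P^{X}_{n} $ for $ m \geq n $, the same holds with $ n $ replaced by any $ m \geq n $. Pick $ i $ so that $ Q^{X}_{i} $ truncates the first coordinate at some level $ m \geq n $ (possible since this level is unbounded in $ i $ by the definition of the $ Q^{X}_{i} $). If $ Q^{X}_{i} \mathbf{f} = Q^{X}_{i} Ug $ for some $ g \in IX $, then comparing first coordinates gives $ \tfrac{\sqrt{3}}{2} P^{X}_{m} f = \tfrac{\sqrt{3}}{2} P^{X}_{m} g $, hence $ P^{X}_{m} f = P^{X}_{m} g \in P^{X}_{m} IX $, a contradiction. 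So $ Q^{X}_{i} \mathbf{f} \notin Q^{X}_{i} UIX $.

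Suppose next that $ \mathbf{f} = (f_{1}, f_{2}, \dots) \notin U(C([0, 1])) $. As $ U(\tfrac{2}{\sqrt{3}} f_{1}) = (f_{1}, \tfrac{1}{2} f_{1}, \tfrac{1}{4} f_{1}, \dots) $ agrees with $ \mathbf{f} $ in the first coordinate but differs from it, there is $ j \geq 2 $ with $ e := f_{j} - 2^{-(j-1)} f_{1} \neq 0 $. Since $ P^{X}_{m} e \to e $ as $ m \to \infty $, fix $ N $ with $ P^{X}_{N} e \neq 0 $, so that $ P^{X}_{m} e \neq 0 $ for every $ m \geq N $. Choose $ i $ so that in $ Q^{X}_{i} $ the $ j $-th coordinate is truncated at a level $ m_{j} \geq N $ and the first at a level $ m_{1} \geq m_{j} $ (the truncation levels are non-increasing along the coordinates and each is unbounded in $ i $). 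If $ Q^{X}_{i} \mathbf{f} = Q^{X}_{i} Ug $ with $ g \in IX $, the first-coordinate identity $ P^{X}_{m_{1}} f_{1} = \tfrac{\sqrt{3}}{2} P^{X}_{m_{1}} g $ yields, after applying $ P^{X}_{m_{j}} $ and using $ P^{X}_{m_{j}} P^{X}_{m_{1}} = P^{X}_{m_{j}} $, that $ P^{X}_{m_{j}} g = \tfrac{2}{\sqrt{3}} P^{X}_{m_{j}} f_{1} $; substituting into the $ j $-th coordinate identity $ P^{X}_{m_{j}} f_{j} = \tfrac{\sqrt{3}}{2^{j}} P^{X}_{m_{j}} g $ gives $ P^{X}_{m_{j}} f_{j} = 2^{-(j-1)} P^{X}_{m_{j}} f_{1} $, i.e. $ P^{X}_{m_{j}} e = 0 $, contradicting $ m_{j} \geq N $. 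Hence again $ Q^{X}_{i} \mathbf{f} \notin Q^{X}_{i} UIX $.

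The routine ingredients are that the truncation levels occurring in $ Q^{X}_{i} $ are non-increasing along the coordinates and each tends to infinity with $ i $, together with the arithmetic of the weights $ 2^{-(k-1)} $ coming from $ U $. The one place that needs genuine care — and hence the main obstacle — is the second case, where one must juggle at once the weights of $ U $, the composition rule $ P^{X}_{m} P^{X}_{m'} = P^{X}_{\min\{m, m'\}} $, and the fact that the first coordinate is truncated at the largest level, so that matching the first coordinates already determines $ g $ on all the remaining active coordinates of $ Q^{X}_{i} $.
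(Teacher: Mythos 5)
Your proof is correct and follows essentially the same route as the paper: the same case split (whether $\mathbf{f}$ lies in the range of $U$), the same appeal to Claim~\ref{claim22} when it does, and the same device of choosing $i$ with sufficiently large truncation levels and applying a further truncation $P^{X}_{n}$ to compare coordinates when it does not. The only cosmetic difference is that you witness the failure of proportionality by comparing $f_{j}$ with $f_{1}$, while the paper compares adjacent coordinates $f_{k}$ and $2f_{k+1}$; the two variants are interchangeable.
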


\begin{proof}
For a general $ \mathbf{g} \in \ell_{2}(C([0, 1])) $, we will denote its coordinates by $ g_{k} $ or by $ (\mathbf{g})_{k} $. There are two possibilities.

(1) Assume that $ f_{k} \neq 2f_{k+1} $ for some $ k \in \mathbb{N} $. Let $ n \in \mathbb{N} $ be such that $ P^{X}_{n}f_{k} \neq 2P^{X}_{n}f_{k+1} $ and $ i \in \mathbb{N} $ be large enough so that $ n_{k} \geq n $ and $ n_{k+1} \geq n $ in the expression $ Q^{X}_{i}\mathbf{g} = (P^{X}_{n_{j}}g_{j})_{j=1}^{\infty} $ (we consider $ P^{X}_{0} = 0 $). Then
\begin{align*}
P^{X}_{n}[(Q^{X}_{i}\mathbf{f})_{k}] & = P^{X}_{n}P^{X}_{n_{k}}f_{k} = P^{X}_{n}f_{k} \\
 & \neq 2P^{X}_{n}f_{k+1} = 2P^{X}_{n}P^{X}_{n_{k+1}}f_{k+1} = 2P^{X}_{n}[(Q^{X}_{i}\mathbf{f})_{k+1}],
\end{align*}
while, for every $ g \in C([0, 1]) $ (and, in particular, for every $ g \in IX $),
\begin{align*}
P^{X}_{n} & [(Q^{X}_{i}Ug)_{k}] = P^{X}_{n}P^{X}_{n_{k}}[(Ug)_{k}] = P^{X}_{n}[(Ug)_{k}]= \frac{\sqrt{3}}{2^{k}} P^{X}_{n}g \\
 & = 2P^{X}_{n}[(Ug)_{k+1}] = 2P^{X}_{n}P^{X}_{n_{k+1}}[(Ug)_{k+1}] = 2P^{X}_{n}[(Q^{X}_{i}Ug)_{k+1}],
\end{align*}
and so $ Q^{X}_{i}Ug \neq Q^{X}_{i}\mathbf{f} $.

(2) Assume that $ f_{k} = 2f_{k+1} $ for all $ k \in \mathbb{N} $. Then $ \mathbf{f} = Uf $ for $ f = (2/\sqrt{3})f_{1} $. By our assumption, $ f \notin IX $, and Claim~\ref{claim22} provides $ n \in \mathbb{N} $ such that $ P^{X}_{n}f \notin P^{X}_{n}IX $. Let $ i \in \mathbb{N} $ be large enough so that $ n_{1} \geq n $ in the expression $ Q^{X}_{i}\mathbf{g} = (P^{X}_{n_{j}}g_{j})_{j=1}^{\infty} $ (we consider $ P^{X}_{0} = 0 $). Then, for every $ g \in IX $,
\begin{align*}
P^{X}_{n}[(Q^{X}_{i}Ug)_{1}] & = P^{X}_{n}P^{X}_{n_{1}}[(Ug)_{1}] = P^{X}_{n}[(Ug)_{1}] = \frac{\sqrt{3}}{2} P^{X}_{n}g \\
 & \neq \frac{\sqrt{3}}{2} P^{X}_{n}f = P^{X}_{n}f_{1} = P^{X}_{n}P^{X}_{n_{1}}f_{1} = P^{X}_{n}[(Q^{X}_{i}\mathbf{f})_{1}],
\end{align*}
and so $ Q^{X}_{i}Ug \neq Q^{X}_{i}\mathbf{f} $.
\end{proof}

\begin{claim} \label{claim25}
For $ X \in \mathcal{SE}(A) $ and $ i \in \mathbb{N} $, we have $ \Vert Q^{X}_{i}U \Vert < 1 $.
\end{claim}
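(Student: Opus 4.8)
The plan is to compute the action of $ Q^{X}_{i}U $ explicitly on a function $ f \in C([0, 1]) $ and bound its norm by a geometric series. First I would note that, for each fixed $ i \in \mathbb{N} $, the inductive description of the operators $ Q^{X}_{i} $ shows that $ Q^{X}_{i} $ has the form
$$ \mathbf{g} = (g_{1}, g_{2}, \dots) \longmapsto \big( P^{X}_{n_{1}}g_{1}, \dots, P^{X}_{n_{m}}g_{m}, 0, 0, \dots \big) $$
for a positive integer $ m $ and natural numbers $ n_{1}, \dots, n_{m} $ depending only on $ i $; here $ m \geq 1 $, since the first coordinate is already active for $ Q^{X}_{1} $ (with $ n_{1} = 1 $). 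Applying this to $ \mathbf{g} = Uf $, and recalling that $ (Uf)_{k} = \frac{\sqrt{3}}{2^{k}}f $, I obtain
$$ Q^{X}_{i}Uf = \Big( \tfrac{\sqrt{3}}{2}P^{X}_{n_{1}}f,\ \tfrac{\sqrt{3}}{4}P^{X}_{n_{2}}f,\ \dots,\ \tfrac{\sqrt{3}}{2^{m}}P^{X}_{n_{m}}f,\ 0,\ 0,\ \dots \Big). $$

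The estimate then follows at once from the first of the properties listed in part \ding{182}, namely $ \Vert P^{X}_{n}f \Vert \leq \Vert f \Vert $ (which also holds trivially under the convention $ P^{X}_{0} = 0 $):
$$ \Vert Q^{X}_{i}Uf \Vert^{2} = \frac{3}{4} \sum_{k=1}^{m} \frac{1}{4^{k-1}} \Vert P^{X}_{n_{k}}f \Vert^{2} \leq \frac{3}{4} \Big( \sum_{k=0}^{m-1} \frac{1}{4^{k}} \Big) \Vert f \Vert^{2} = \big( 1 - 4^{-m} \big)\, \Vert f \Vert^{2}. $$
Hence $ \Vert Q^{X}_{i}U \Vert \leq \sqrt{1 - 4^{-m}} < 1 $, which is the assertion.

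I do not expect a genuine obstacle here. The content of the statement is simply that the geometric weights built into the isometry $ U $ make the equality $ \Vert Uf \Vert^{2} = \Vert f \Vert^{2} $ depend on the entire infinite tail of the weight sequence, so that truncating to the finitely many surviving coordinates of $ Q^{X}_{i} $ always discards a fixed positive fraction $ 4^{-m} $ of the squared norm — uniformly in $ X $ and independently of the norm-one projections $ P^{X}_{n_{k}} $. The only point requiring a moment's care is to confirm that the number $ m $ of nonzero coordinates of $ Q^{X}_{i} $ is a well-defined positive integer determined by $ i $ alone, which is immediate from the pattern defining the $ Q^{X}_{i} $.
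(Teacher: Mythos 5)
Your proposal is correct and is essentially the paper's own argument: both truncate $ Q^{X}_{i}Uf $ to its finitely many surviving coordinates, use $ \Vert P^{X}_{n}f \Vert \leq \Vert f \Vert $ coordinatewise, and sum the finite geometric series to get $ \Vert Q^{X}_{i}Uf \Vert^{2} \leq (1 - 4^{-m}) \Vert f \Vert^{2} $, hence $ \Vert Q^{X}_{i}U \Vert < 1 $. The only cosmetic difference is that the paper indexes by the length $ k $ of the supporting initial block and allows $ n_{j} = 0 $ with $ P^{X}_{0} = 0 $, which changes nothing in the estimate.
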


\begin{proof}
There is $ k \in \mathbb{N} $ such that points from the range of $ Q^{X}_{i} $ are supported by the first $ k $ coordinates. Given $ f \in C([0, 1]) $, we have
\begin{align*}
\Vert Q^{X}_{i}Uf \Vert^{2} & = \Big(\frac{\sqrt{3}}{2}\Big)^{2} \Big\Vert \Big( P_{n_{1}}f, \frac{1}{2} P_{n_{2}}f, \dots, \frac{1}{2^{k-1}} P_{n_{k}}f, 0, 0, \dots \Big) \Big\Vert^{2} \\
 & = \frac{3}{4} \Big( \Vert P_{n_{1}}f \Vert^{2} + \frac{1}{4} \Vert P_{n_{2}}f \Vert^{2} + \dots + \frac{1}{4^{k-1}} \Vert P_{n_{k}}f \Vert^{2} \Big) \\
 & \leq \frac{3}{4} \Big( \Vert f \Vert^{2} + \frac{1}{4} \Vert f \Vert^{2} + \dots + \frac{1}{4^{k-1}} \Vert f \Vert^{2} \Big) = \Big(1 - \frac{1}{4^{k}}\Big) \Vert f \Vert^{2}
\end{align*}
for some $ n_{1}, \dots, n_{k} \in \mathbb{N} \cup \{ 0 \} $. It follows that $ \Vert Q^{X}_{i}U \Vert^{2} \leq 1 - \frac{1}{4^{k}} $.
\end{proof}

\ding{184}
For every $ X \in \mathcal{SE}(A) $, we define
$$ \Omega^{X} = \overline{\mathrm{co}} \bigg( \frac{1}{2} B_{\ell_{2}(C([0, 1]))} \cup \bigcup_{i=1}^{\infty} Q^{X}_{i}UIB_{X} \bigg). $$
Let us notice that $ UIB_{X} \subset \Omega^{X} $ and $ Q^{X}_{i}\Omega^{X} \subset \Omega^{X} $ for every $ i \in \mathbb{N} $.

\begin{claim} \label{claim26}
The set
$$ \big\{ (X, \mathbf{f}) \in \mathcal{SE}(A) \times \ell_{2}(C([0, 1])) : \mathbf{f} \in \Omega^{X} \big\} $$
is Borel.
\end{claim}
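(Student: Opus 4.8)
The plan is to exhibit $ \{ (X, \mathbf{f}) : \mathbf{f} \in \Omega^{X} \} $ as a countable intersection of countable unions of Borel sets, by approximating $ \Omega^{X} $ from inside through explicitly parametrized finite convex combinations of points of
$$ K^{X} = \tfrac{1}{2} B_{\ell_{2}(C([0, 1]))} \cup \bigcup\nolimits_{i=1}^{\infty} Q^{X}_{i}UIB_{X}, \qquad \Omega^{X} = \overline{\mathrm{co}}(K^{X}). $$

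First I would fix a countable dense set $ \{ b_{r} : r \in \mathbb{N} \} $ in $ \frac{1}{2} B_{\ell_{2}(C([0, 1]))} $ and Borel selectors $ d_{1}, d_{2}, \dots : \mathcal{F}(A) \to A $ from Theorem~\ref{kurryl}; since every $ X \in \mathcal{SE}(A) $ contains $ 0 $, it is non-empty and $ \{ d_{n}(X) : n \in \mathbb{N} \} $ is dense in $ X $. As $ \{ x \in X : \Vert x \Vert < 1 \} $ is dense in $ B_{X} $, the points $ d_{n}(X) $ satisfying $ \Vert d_{n}(X) \Vert \leq 1 $ are already dense in $ B_{X} $. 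I would then prove that $ \mathbf{f} \in \Omega^{X} $ if and only if for every $ k \in \mathbb{N} $ there are $ m \in \mathbb{N} \cup \{ 0 \} $, indices $ i_{1}, \dots, i_{m}, n_{1}, \dots, n_{m} \in \mathbb{N} $ with $ \Vert d_{n_{j}}(X) \Vert \leq 1 $ for each $ j $, nonnegative rationals $ q_{0}, q_{1}, \dots, q_{m} $ with $ q_{0} + q_{1} + \dots + q_{m} = 1 $, and $ r \in \mathbb{N} $, such that
$$ \Big\Vert \mathbf{f} - q_{0} b_{r} - \sum_{j=1}^{m} q_{j} Q^{X}_{i_{j}}UId_{n_{j}}(X) \Big\Vert < \frac{1}{k}. $$
For the backward implication, each such point is a convex combination of $ b_{r} \in \frac{1}{2}B \subset K^{X} $ and of the points $ Q^{X}_{i_{j}}UId_{n_{j}}(X) \in Q^{X}_{i_{j}}UIB_{X} \subset K^{X} $, hence lies in $ \mathrm{co}(K^{X}) \subset \Omega^{X} $, and letting $ k \to \infty $ together with the closedness of $ \Omega^{X} $ gives $ \mathbf{f} \in \Omega^{X} $. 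For the forward implication I would take a point of $ \mathrm{co}(K^{X}) $ within $ \frac{1}{2k} $ of $ \mathbf{f} $; after absorbing all the summands lying in the convex set $ \frac{1}{2}B $ into a single one, it has the form $ \mu b + \sum_{j=1}^{m} \lambda_{j} Q^{X}_{i_{j}}UIx_{j} $ with $ b \in \frac{1}{2}B $, $ x_{j} \in B_{X} $, and $ \mu, \lambda_{j} \geq 0 $ summing to $ 1 $; then I would replace each $ x_{j} $ by a nearby $ d_{n_{j}}(X) $ with $ \Vert d_{n_{j}}(X) \Vert \leq 1 $ (the cost is at most $ \Vert x_{j} - d_{n_{j}}(X) \Vert $ since $ \Vert Q^{X}_{i_{j}}UI \Vert \leq 1 $), replace $ b $ by a nearby $ b_{r} $, and replace $ \mu, \lambda_{j} $ by nearby nonnegative rationals summing to $ 1 $, keeping the total error below $ \frac{1}{2k} $.

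Finally I would check that, for each fixed tuple $ (k, m, i_{1}, \dots, i_{m}, n_{1}, \dots, n_{m}, q_{0}, \dots, q_{m}, r) $, the displayed condition defines a Borel subset of $ \mathcal{SE}(A) \times \ell_{2}(C([0, 1])) $: the maps $ X \mapsto d_{n_{j}}(X) $ are Borel, $ I $ and $ U $ are continuous, and $ (X, \mathbf{g}) \mapsto Q^{X}_{i}\mathbf{g} $ is Borel by Claim~\ref{claim23}, so $ X \mapsto q_{0} b_{r} + \sum_{j} q_{j} Q^{X}_{i_{j}}UId_{n_{j}}(X) $ is Borel, hence so is $ (X, \mathbf{f}) \mapsto \Vert \mathbf{f} - q_{0} b_{r} - \sum_{j} q_{j} Q^{X}_{i_{j}}UId_{n_{j}}(X) \Vert $; intersecting the resulting Borel set with the Borel sets $ \{ X : \Vert d_{n_{j}}(X) \Vert \leq 1 \} $ gives a Borel set. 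As only countably many tuples occur, the union over all tuples and the intersection over $ k \in \mathbb{N} $ present $ \{ (X, \mathbf{f}) : \mathbf{f} \in \Omega^{X} \} $ as a Borel set. The step I expect to need the most care is the forward implication of the characterization: one must be sure that arbitrary finite convex combinations of members of $ K^{X} $ are approximable within the above countable parametrization, which is exactly what the preliminary density of $ \{ d_{n}(X) : \Vert d_{n}(X)\Vert \le 1 \} $ in $ B_{X} $ and the absorption of the $ \frac{1}{2}B $-summands are there to provide; the Borel bookkeeping afterwards is routine.
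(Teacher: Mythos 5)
Your proof is correct and follows essentially the same route as the paper: both characterize $\mathbf{f}\in\Omega^{X}$ by a countable family of rational convex combinations built from a dense sequence in the half-ball and Borel selectors giving dense sequences in $B_{X}$, and both reduce the Borelness to Claim~\ref{claim23}. The only (harmless) difference is that you obtain density in $B_{X}$ by filtering the Kuratowski--Ryll-Nardzewski selectors by $\Vert d_{n}(X)\Vert\le 1$, where the paper simply invokes Borel maps $x_{n}:\mathcal{SE}(A)\to B_{A}$ with $x_{n}(X)$ dense in $B_{X}$.
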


\begin{proof}
Let $ \mathbf{f}_{1}, \mathbf{f}_{2}, \dots $ be dense in $ B_{\ell_{2}(C([0, 1]))} $. Let $ x_{1}, x_{2}, \dots : \mathcal{SE}(A) \to B_{A} $ be Borel mappings such that $ x_{1}(X), x_{2}(X), \dots $ is dense in $ B_{X} $ for every $ X \in \mathcal{SE}(A) $ (it is easy to find such sequence using Theorem~\ref{kurryl}). We have
\begin{eqnarray*}
\mathbf{f} \in \Omega^{X} & \Leftrightarrow & \forall l \in \mathbb{N} \; \exists m \in \mathbb{N} \; \exists k, n_{1}, \dots, n_{m} \in \mathbb{N} \\
 & & \exists \gamma_{0}, \gamma_{1}, \dots, \gamma_{m} \in \mathbb{Q} \cap [0, 1], \; \sum_{i=0}^{m} \gamma_{i} = 1 : \\
 & & \bigg\Vert \mathbf{f} - \Big[ \frac{1}{2} \gamma_{0} \mathbf{f}_{k} + \sum_{i=1}^{m} \gamma_{i} Q^{X}_{i}UIx_{n_{i}}(X) \Big] \bigg\Vert < \frac{1}{l}.
\end{eqnarray*}
It remains to note that, by Claim~\ref{claim23}, the mapping $ X \mapsto Q^{X}_{i}UIx_{n}(X) $ is Borel for all $ i, n \in \mathbb{N} $.
\end{proof}

\begin{claim} \label{claim27}
For $ X \in \mathcal{SE}(A) $ and $ \mathbf{f} \in \ell_{2}(C([0, 1])) \setminus UIX $ with $ \Vert \mathbf{f} \Vert = 1 $, we have $ \mathbf{f} \notin \Omega^{X} $.
\end{claim}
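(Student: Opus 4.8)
The plan is to assume $ \mathbf{f} \in \Omega^{X} $ and obtain a contradiction by pushing a representation of a point near $ \mathbf{f} $ through a single finite-rank projection supplied by Claim~\ref{claim24}. Since $ \mathbf{f} \notin UIX $, Claim~\ref{claim24} gives $ i_{0} \in \mathbb{N} $ with $ Q^{X}_{i_{0}}\mathbf{f} \notin Q^{X}_{i_{0}}UIX $. Being the $ i_{0} $-th partial sum operator of a basis of $ \ell_{2}(C([0, 1])) $, the operator $ Q^{X}_{i_{0}} $ has finite rank, so $ Q^{X}_{i_{0}}UIX $ is a finite-dimensional, hence closed, subspace; therefore $ \delta := \mathrm{dist}(Q^{X}_{i_{0}}\mathbf{f}, Q^{X}_{i_{0}}UIX) > 0 $. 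It will suffice to show that $ Q^{X}_{i_{0}} $ carries points of $ \Omega^{X} $ lying close to $ \mathbf{f} $ arbitrarily close to the subspace $ Q^{X}_{i_{0}}UIX $, which contradicts $ \delta > 0 $.

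Since $ \frac{1}{2}B_{\ell_{2}(C([0, 1]))} $ and each $ Q^{X}_{i}UIB_{X} $ are convex, grouping terms shows that every element of the convex hull generating $ \Omega^{X} $ has the form $ \frac{1}{2}\gamma_{0}\mathbf{g} + \sum_{i \in F} \gamma_{i}Q^{X}_{i}UIx_{i} $ with $ F \subset \mathbb{N} $ finite, $ \gamma_{0}, \gamma_{i} \geq 0 $, $ \gamma_{0} + \sum_{i} \gamma_{i} = 1 $, $ \mathbf{g} \in B_{\ell_{2}(C([0, 1]))} $ and $ x_{i} \in B_{X} $. Thus I would pick $ \mathbf{h}^{(l)} \to \mathbf{f} $ with each $ \mathbf{h}^{(l)} $ of this shape and corresponding data $ \gamma^{(l)}_{0}, \gamma^{(l)}_{i}, \mathbf{g}^{(l)}, x^{(l)}_{i}, F_{l} $. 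Setting $ \eta := 1 - \max_{1 \leq i < i_{0}} \Vert Q^{X}_{i}U \Vert $, which is positive by Claim~\ref{claim25} (a finite maximum of numbers $ < 1 $; if $ i_{0} = 1 $ the sums over $ i < i_{0} $ are empty), the triangle inequality, the fact that $ I $ is an isometry, the bound $ \Vert Q^{X}_{i}U \Vert \leq 1 $ and the bound $ \Vert Q^{X}_{i}U \Vert \leq 1 - \eta $ for $ i < i_{0} $ give
$$ \Vert \mathbf{h}^{(l)} \Vert \leq 1 - \frac{1}{2}\gamma^{(l)}_{0} - \eta \sum_{i < i_{0}} \gamma^{(l)}_{i}. $$
As $ \Vert \mathbf{h}^{(l)} \Vert \leq 1 $ and $ \Vert \mathbf{h}^{(l)} \Vert \to \Vert \mathbf{f} \Vert = 1 $, putting $ \delta_{l} := 1 - \Vert \mathbf{h}^{(l)} \Vert \to 0 $ we get $ \frac{1}{2}\gamma^{(l)}_{0} + \eta \sum_{i < i_{0}} \gamma^{(l)}_{i} \leq \delta_{l} $; hence $ \gamma^{(l)}_{0} \to 0 $ and $ \sum_{i < i_{0}} \gamma^{(l)}_{i} \to 0 $ (the convex weights ``escape to infinity'').

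Now I would apply $ Q^{X}_{i_{0}} $ and use $ Q^{X}_{i_{0}}Q^{X}_{i} = Q^{X}_{\min\{i_{0}, i\}} $. The part of $ Q^{X}_{i_{0}}\mathbf{h}^{(l)} $ coming from the indices $ i \geq i_{0} $ is $ \sum_{i \geq i_{0}} \gamma^{(l)}_{i}Q^{X}_{i_{0}}UIx^{(l)}_{i} = Q^{X}_{i_{0}}UI\big( \sum_{i \geq i_{0}} \gamma^{(l)}_{i}x^{(l)}_{i} \big) $, which belongs to $ Q^{X}_{i_{0}}UIX $ because $ X $ is linear and the sum is finite. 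Estimating the remaining terms with $ \Vert Q^{X}_{i_{0}} \Vert \leq 1 $ and $ \Vert Q^{X}_{i}U \Vert \leq 1 $, we obtain
$$ \mathrm{dist}\big( Q^{X}_{i_{0}}\mathbf{h}^{(l)}, Q^{X}_{i_{0}}UIX \big) \leq \frac{1}{2}\gamma^{(l)}_{0} + \sum_{i < i_{0}} \gamma^{(l)}_{i} \longrightarrow 0. $$
Since $ Q^{X}_{i_{0}}\mathbf{h}^{(l)} \to Q^{X}_{i_{0}}\mathbf{f} $ by continuity of $ Q^{X}_{i_{0}} $, this forces $ \mathrm{dist}(Q^{X}_{i_{0}}\mathbf{f}, Q^{X}_{i_{0}}UIX) = 0 $, i.e. $ Q^{X}_{i_{0}}\mathbf{f} \in Q^{X}_{i_{0}}UIX $, contradicting the choice of $ i_{0} $; hence $ \mathbf{f} \notin \Omega^{X} $.

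The step needing the most care is the reduction: one must check that the generating convex combinations split cleanly, into one part lying in $ \frac{1}{2}B_{\ell_{2}(C([0, 1]))} $ and parts lying in the individual sets $ Q^{X}_{i}UIB_{X} $, and then extract from the single scalar $ 1 - \Vert \mathbf{h}^{(l)} \Vert $ both that $ \gamma^{(l)}_{0} \to 0 $ and that the weight on the ``low'' indices $ \sum_{i < i_{0}} \gamma^{(l)}_{i} \to 0 $. Everything afterward is a direct use of Claims~\ref{claim24} and~\ref{claim25} together with the monotonicity $ \Vert Q^{X}_{i} \Vert \leq 1 $; in particular no subsequence has to be extracted.
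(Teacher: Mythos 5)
Your proof is correct, but it runs along a genuinely different track from the paper's. The paper argues dually: from the index $i$ given by Claim~\ref{claim24} it builds a separating functional $\mathbf{g}^{*} = \mathbf{f}^{*} + \varepsilon\, z^{*}\circ Q^{X}_{i}$, where $\mathbf{f}^{*}$ norms $\mathbf{f}$, $z^{*}$ vanishes on $Q^{X}_{i}UIX$ and is positive at $Q^{X}_{i}\mathbf{f}$, and $\varepsilon$ comes from Claim~\ref{claim25}; it then checks $\mathbf{g}^{*}\leq 1$ on each generator of $\Omega^{X}$ (splitting into $\frac{1}{2}B$, the generators with $j<i$, and those with $j\geq i$), so the closed convex hull lies in the half-space while $\mathbf{g}^{*}(\mathbf{f})>1$. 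You argue primally: you represent points of the convex hull as $\frac{1}{2}\gamma_{0}\mathbf{g} + \sum_{i}\gamma_{i}Q^{X}_{i}UIx_{i}$ (legitimate, since each generating set is convex), and the quantitative inequality $\Vert \mathbf{h}\Vert \leq 1 - \frac{1}{2}\gamma_{0} - \eta\sum_{i<i_{0}}\gamma_{i}$, combined with $\Vert\mathbf{f}\Vert = 1$, forces the weight on the ball and on the low indices to vanish in the limit; applying $Q^{X}_{i_{0}}$ and using $Q^{X}_{i_{0}}Q^{X}_{j}=Q^{X}_{\min\{i_{0},j\}}$ then pushes $Q^{X}_{i_{0}}\mathbf{f}$ into the closed (finite-dimensional) subspace $Q^{X}_{i_{0}}UIX$, contradicting Claim~\ref{claim24}. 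Both proofs consume exactly the same ingredients (Claims~\ref{claim24} and \ref{claim25}, $\Vert Q^{X}_{j}\Vert\leq 1$, the semigroup identity for the partial sum operators, and the finite-dimensionality of $Q^{X}_{i_{0}}UIX$, which the paper uses implicitly to get $z^{*}$); yours trades the Hahn--Banach separation of $\Omega^{X}$ for a direct bookkeeping of convex weights, which is slightly more elementary, while the paper's functional argument dispenses with the closure of the hull automatically because a closed half-space containing the generators contains $\Omega^{X}$. Your handling of the edge case $i_{0}=1$ and the merging of generators into a single term per set are both fine; no gap.
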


\begin{proof}
Claim~\ref{claim24} provides $ i \in \mathbb{N} $ such that $ Q^{X}_{i}\mathbf{f} \notin Q^{X}_{i}UIX $. Let $ \mathbf{f}^{*} \in \ell_{2}(C([0, 1]))^{*} $ be such that $ \Vert \mathbf{f}^{*} \Vert = 1 = \mathbf{f}^{*}(\mathbf{f}) $ and let $ z^{*} $ be a functional on $ Q^{X}_{i}\ell_{2}(C([0, 1])) $ such that $ \Vert z^{*} \Vert = 1 $, $ z^{*}(Q^{X}_{i}\mathbf{f}) > 0 $ and $ z^{*}(Q^{X}_{i}\mathbf{g}) = 0 $ for $ \mathbf{g} \in UIX $. By Claim~\ref{claim25}, there is $ \varepsilon \in (0, 1] $ such that $ \Vert Q^{X}_{j}U \Vert \leq 1 - \varepsilon $ for $ 1 \leq j < i $. Let us define
$$ \mathbf{g}^{*} = \mathbf{f}^{*} + \varepsilon \cdot z^{*} \circ Q^{X}_{i}. $$
Then
$$ \mathbf{g}^{*}(\mathbf{f}) = \mathbf{f}^{*}(\mathbf{f}) + \varepsilon \cdot z^{*}(Q^{X}_{i}\mathbf{f}) > 1. $$
We claim that $ \mathbf{g}^{*} $ separates $ \mathbf{f} $ from $ \Omega^{X} $, showing that
$$ \mathbf{g}^{*}(\mathbf{u}) \leq 1, \quad \mathbf{u} \in \Omega^{X}. $$
If $ \mathbf{u} \in \frac{1}{2} B_{\ell_{2}(C([0, 1]))} $, then $ \mathbf{g}^{*}(\mathbf{u}) \leq \Vert \mathbf{g}^{*} \Vert \Vert \mathbf{u} \Vert \leq (1 + \varepsilon) \cdot \frac{1}{2} \leq 1 $. So, it remains to show that $ \mathbf{g}^{*}(\mathbf{u}) \leq 1 $ for $ \mathbf{u} = Q^{X}_{j}Ug $ where $ j \in \mathbb{N} $ and $ g \in IB_{X} $. If $ 1 \leq j < i $, then $ \mathbf{g}^{*}(\mathbf{u}) \leq \Vert \mathbf{g}^{*} \Vert \Vert Q^{X}_{j}U \Vert \Vert g \Vert \leq (1 + \varepsilon) (1 - \varepsilon) \leq 1 $. If $ j \geq i $, then $ z^{*}(Q^{X}_{i}\mathbf{u}) = z^{*}(Q^{X}_{i}Q^{X}_{j}Ug) = z^{*}(Q^{X}_{i}Ug) = 0 $ and
$$ \mathbf{g}^{*}(\mathbf{u}) = \mathbf{f}^{*}(\mathbf{u}) + \varepsilon \cdot z^{*}(Q^{X}_{i}\mathbf{u}) = \mathbf{f}^{*}(\mathbf{u}) \leq \Vert \mathbf{f}^{*} \Vert \Vert Q^{X}_{j} \Vert \Vert Ug \Vert \leq 1, $$
which completes the verification of $ \mathbf{f} \notin \Omega^{X} $.
\end{proof}

\ding{185}
Now, we are ready to finish the proof of Lemma~\ref{lemma2}. For every $ X \in \mathcal{SE}(A) $, we define $ \Vert \cdot \Vert^{X} $ as the norm on $ \ell_{2}(C([0, 1])) $ which has $ \Omega^{X} $ for its unit ball. Let us check that properties (I)--(V) are valid for the choice $ Z = \ell_{2}(C([0, 1])) $ and $ J = UI $.

(I) This follows from $ \frac{1}{2} B_{\ell_{2}(C([0, 1]))} \subset \Omega^{X} \subset B_{\ell_{2}(C([0, 1]))} $.

(II) We know that $ UIB_{X} \subset \Omega^{X} \subset B_{\ell_{2}(C([0, 1]))} $, which means that $ \Vert \mathbf{f} \Vert = \Vert \mathbf{f} \Vert^{X} $ for every $ \mathbf{f} \in UIX $. Assume that $ \mathbf{f} \in \ell_{2}(C([0, 1])) \setminus UIX $. Assume moreover without loss of generality that $ \Vert \mathbf{f} \Vert = 1 $. By Claim~\ref{claim27}, we have $ \mathbf{f} \notin \Omega^{X} $, which means that $ \Vert \mathbf{f} \Vert^{X} > 1 = \Vert \mathbf{f} \Vert $.

(III) It follows from $ Q^{X}_{i}\Omega^{X} \subset \Omega^{X} $ that $ \Vert Q^{X}_{i} \Vert^{X} \leq 1 $.

(IV) This is already provided by Claim~\ref{claim23}.

(V) By Claim~\ref{claim26}, the pre-image of $ [0, 1] $ is Borel. Clearly, the pre-image of $ [0, r] $ is also Borel for every $ r > 0 $, which gives (V).

\section{Proof of main results}

Let us consider $ A = C([0, 1]) $. Let a separable Banach space $ Z $, an isometry $ J : C([0, 1]) \to Z $, a collection $ \{ \Vert \cdot \Vert^{X} : X \in \mathcal{SE}(C([0, 1])) \} $ of norms on $ Z $ and a system $ \{ Q^{X}_{n} : X \in \mathcal{SE}(C([0, 1])), n \in \mathbb{N} \} $ of projections on $ Z $ satisfy properties (I)--(V) from Lemma~\ref{lemma2}.

We are going to apply the same technique as in \cite[Section~8]{kurka1}. This will enable to obtain a new collection $ \{ \opnorm{\cdot}^{X} : X \in \mathcal{SE}(C([0, 1])) \} $ of norms on $ Z $ with the same properties and with the additional property that all line segments contained in the unit sphere of $ (Z, \opnorm{\cdot}^{X}) $ are contained in $ JX $. 

Let $ \varrho $ be a norm on $ \mathbb{R}^{3} $ such that
\begin{itemize}
\item $ \frac{1}{2}(|r|+|s|) \leq \varrho(r,s,t) \leq \max \{ |r|, |s|, |t| \} $ and, in particular, the unit sphere contains the line segment $ [(1,1,-1),(1,1,1)] $,
\item $ \varrho(r',s',t') \geq \varrho(r,s,t) $ for $ 0 \leq r \leq r', 0 \leq s \leq s', 0 \leq t \leq t' $,
\item $ \varrho(r,s,t') > \varrho(r,s,t) $ for $ 0 < r < s $ and $ 0 < t < t' $.
\end{itemize}
An example provided in \cite{kurka1} is the norm given by
$$ B_{(\mathbb{R}^{3}, \varrho)} = \mathrm{co} \, \Big( \{ (\pm 1, \pm 1, \pm 1) \} \cup \sqrt{2} B \Big), $$
where $ B $ stands for the Euclidean unit ball of $ \mathbb{R}^{3} $.

For all $ X \in \mathcal{SE}(C([0, 1])) $, we define (considering $ Q^{X}_{0} = 0 $)
$$ \sigma^{X}(z) = \Big( \sum_{n=1}^{\infty} \frac{1}{2^{n+2}} \big\Vert Q^{X}_{n}z - Q^{X}_{n-1}z \big\Vert^{2} \Big)^{1/2}, \quad z \in Z, $$
and
$$ \opnorm{z}^{X} = \varrho \big( \Vert z \Vert, \Vert z \Vert^{X}, \sigma^{X}(z) \big), \quad z \in Z. $$

\begin{claim} \label{multiclaim}
The following properties take place:

{\rm (i)} $ \sigma^{X} $ is a strictly convex seminorm on $ Z $,

{\rm (ii)} $ \opnorm{\cdot}^{X} $ is a norm on $ Z $,

{\rm (iii)} $ \sigma^{X}(z) \leq \Vert z \Vert $,

{\rm (iv)} $ \Vert z \Vert \leq \opnorm{z}^{X} \leq 2\Vert z \Vert $,

{\rm (v)} $ \sigma^{X}(Q^{X}_{n}z) \leq \sigma^{X}(z) $,

{\rm (vi)} $ \opnorm{Q^{X}_{n}z}^{X} \leq \opnorm{z}^{X} $,

{\rm (vii)} the function $ (X, z) \mapsto \sigma^{X}(z) $ is Borel,

{\rm (viii)} the function $ (X, z) \mapsto \opnorm{z}^{X} $ is Borel.
\end{claim}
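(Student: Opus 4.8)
The plan is to verify the eight listed properties essentially in the order given, since the later ones lean on the earlier ones. I would first dispose of (iii) and then (i), (ii), (iv), which are purely about the pointwise algebra of the three ingredients $\Vert z\Vert$, $\Vert z\Vert^{X}$ and $\sigma^{X}(z)$ together with the chosen norm $\varrho$ on $\mathbb{R}^{3}$; then (v), (vi) about monotonicity under the projections; and finally (vii), (viii) about Borel measurability, which are formal consequences of Lemma~\ref{lemma2}(IV)--(V).

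For (iii): expanding $\sigma^{X}(z)^{2}=\sum_{n}2^{-(n+2)}\Vert Q^{X}_{n}z-Q^{X}_{n-1}z\Vert^{2}$ and using $\Vert Q^{X}_{n}z-Q^{X}_{n-1}z\Vert\le\Vert Q^{X}_{n}\Vert\,\Vert z\Vert+\Vert Q^{X}_{n-1}\Vert\,\Vert z\Vert\le 2\Vert z\Vert$ by Lemma~\ref{lemma2}(III) gives $\sigma^{X}(z)^{2}\le 4\Vert z\Vert^{2}\sum_{n}2^{-(n+2)}=\Vert z\Vert^{2}$, so $\sigma^{X}(z)\le\Vert z\Vert$; one should double-check whether the constants as written actually yield the clean bound $\Vert z\Vert$ or only a fixed multiple of it, adjusting the statement's proof accordingly (the weights $2^{-(n+2)}$ sum to $1/4$, which against the factor $4$ gives exactly $1$). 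For (i), $\sigma^{X}$ is a seminorm as an $\ell_{2}$-combination of the seminorms $z\mapsto\Vert Q^{X}_{n}z-Q^{X}_{n-1}z\Vert$, and it is strictly convex because the $\ell_{2}$-norm is strictly convex: if $\sigma^{X}(\frac{z+w}{2})=\frac{1}{2}(\sigma^{X}(z)+\sigma^{X}(w))$ with both nonzero, the equality case in Minkowski's inequality for $\ell_{2}$ forces the vectors $(\Vert Q^{X}_{n}z-Q^{X}_{n-1}z\Vert)_{n}$ and $(\Vert Q^{X}_{n}w-Q^{X}_{n-1}w\Vert)_{n}$ to be proportional — I would state the precise sense of ``strictly convex seminorm'' being used here and check it matches what Section~8 of \cite{kurka1} needs. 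For (ii) and (iv): from $\frac{1}{2}(|r|+|s|)\le\varrho(r,s,t)\le\max\{|r|,|s|,|t|\}$ and $\Vert z\Vert\le\Vert z\Vert^{X}\le 2\Vert z\Vert$ (Lemma~\ref{lemma2}(I)) and (iii), we get $\Vert z\Vert=\frac{1}{2}(\Vert z\Vert+\Vert z\Vert)\le\varrho(\Vert z\Vert,\Vert z\Vert^{X},\sigma^{X}(z))\le\max\{\Vert z\Vert,\Vert z\Vert^{X},\sigma^{X}(z)\}=\Vert z\Vert^{X}\le 2\Vert z\Vert$, which is exactly (iv) and in particular shows $\opnorm{\cdot}^{X}$ is a genuine norm (positive-definite, and homogeneous and subadditive as $\varrho$ applied coordinatewise to the three seminorms — one must note $\varrho$ is monotone in absolute values of coordinates so that subadditivity passes through).

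For (v): since $Q^{X}_{n}$ commute with $Q^{X}_{m}-Q^{X}_{m-1}$ and $Q^{X}_{n}(Q^{X}_{m}-Q^{X}_{m-1})$ equals $Q^{X}_{m}-Q^{X}_{m-1}$ when $m\le n$ and $0$ when $m>n$ (using $Q^{X}_{n}Q^{X}_{m}=Q^{X}_{\min}$ from the monotone-basis structure), we have $\sigma^{X}(Q^{X}_{n}z)^{2}=\sum_{m\le n}2^{-(m+2)}\Vert Q^{X}_{m}z-Q^{X}_{m-1}z\Vert^{2}\le\sigma^{X}(z)^{2}$. For (vi): combine $\Vert Q^{X}_{n}z\Vert\le\Vert z\Vert$, $\Vert Q^{X}_{n}z\Vert^{X}\le\Vert z\Vert^{X}$ (Lemma~\ref{lemma2}(III)) and (v) with the monotonicity of $\varrho$ in the absolute values of its coordinates to get $\opnorm{Q^{X}_{n}z}^{X}=\varrho(\Vert Q^{X}_{n}z\Vert,\Vert Q^{X}_{n}z\Vert^{X},\sigma^{X}(Q^{X}_{n}z))\le\varrho(\Vert z\Vert,\Vert z\Vert^{X},\sigma^{X}(z))=\opnorm{z}^{X}$. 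For (vii): the maps $(X,z)\mapsto Q^{X}_{n}z$ are Borel by Lemma~\ref{lemma2}(IV), hence so is each $(X,z)\mapsto\Vert Q^{X}_{n}z-Q^{X}_{n-1}z\Vert$ by continuity of the norm, and $\sigma^{X}(z)$ is a pointwise-convergent (uniformly convergent, by the bound in (iii)) series of such Borel functions, so it is Borel; (viii) then follows since $(X,z)\mapsto\opnorm{z}^{X}=\varrho(\Vert z\Vert,\Vert z\Vert^{X},\sigma^{X}(z))$ is the composition of the Borel map $(X,z)\mapsto(\Vert z\Vert,\Vert z\Vert^{X},\sigma^{X}(z))$ — Borel using Lemma~\ref{lemma2}(V) and (vii) — with the continuous $\varrho$. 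The main obstacle I anticipate is not any single step but pinning down exactly which of the three listed axioms for $\varrho$ are invoked where (monotonicity in absolute values for subadditivity of $\opnorm{\cdot}^{X}$ and for (vi); the sandwich inequality for (iv); and, crucially, the strict-monotonicity axiom $\varrho(r,s,t')>\varrho(r,s,t)$ — which is what will later force line segments in the unit sphere into $JX$ but is probably not needed for this particular claim) and making sure the constant bookkeeping in $\sigma^{X}$ really delivers the stated inequalities rather than ones off by a harmless factor.
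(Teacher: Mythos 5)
Items (ii)--(viii) of your proposal are correct and essentially identical to the paper's argument (same use of the axioms of $\varrho$, the same telescoping computation for (v), the same Borel-composition reasoning for (vii)--(viii)), and your constant bookkeeping in (iii) is exactly the paper's. The genuine gap is in (i), which is the one place where a real idea is needed. Your argument treats $\sigma^{X}$ as the $\ell_{2}$-norm of the sequence of seminorms $z \mapsto \Vert Q^{X}_{n}z - Q^{X}_{n-1}z \Vert$ and invokes the equality case of Minkowski's inequality, concluding only that the two nonnegative sequences $(\Vert Q^{X}_{n}z - Q^{X}_{n-1}z \Vert)_{n}$ and $(\Vert Q^{X}_{n}w - Q^{X}_{n-1}w \Vert)_{n}$ are proportional. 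That conclusion is not strict convexity and does not imply $z = w$ (proportional norm sequences can perfectly well come from distinct vectors), nor does it yield the strict midpoint inequality $\sigma^{X}\big(\tfrac{u+v}{2}\big) < \tfrac{1}{2}\big(\sigma^{X}(u) + \sigma^{X}(v)\big)$ that Claim~\ref{claim32} later extracts from (i); you yourself flag the precise sense of ``strictly convex seminorm'' as unresolved, and as written the step would fail.

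The missing idea is the rank-one linearization: since each $Q^{X}_{n} - Q^{X}_{n-1}$ has one-dimensional range, one can write $\Vert Q^{X}_{n}z - Q^{X}_{n-1}z \Vert = |z^{*}_{n}(z)|$ with $\Vert z^{*}_{n} \Vert \leq 2$, so that $\sigma^{X}(z) = \Vert Tz \Vert_{\ell_{2}}$ for the \emph{linear} map $Tz = \big( 2^{-(n+2)/2} z^{*}_{n}(z) \big)_{n}$, which is injective because $Tz = 0$ forces $Q^{X}_{n}z = 0$ for all $n$ and hence $z = 0$. Strict convexity of $\ell_{2}$ then transfers through the injective linear $T$ and gives exactly the property used later. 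Alternatively, your Minkowski route can be completed, but only by using the same rank-one observation: coordinatewise equality in the first (domination) step plus equality of the moduli $|z^{*}_{n}(u)| = |z^{*}_{n}(v)|$ and the sign analysis give $z^{*}_{n}(u - v) = 0$ for all $n$, hence $u = v$. Either way, without the observation that the difference projections have one-dimensional range (and a completed equality-case analysis), item (i) --- and with it the strict-convexity half of the paper's results --- is not established.
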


\begin{proof}
(i) As the range of $ Q^{X}_{n} - Q^{X}_{n-1} $ is one-dimensional, there is $ z^{*}_{n} \in Z^{*} $ such that $ \Vert z^{*}_{n} \Vert \leq 2 $ and $ \Vert Q^{X}_{n}z - Q^{X}_{n-1}z \Vert = |z^{*}_{n}(z)| $ for every $ z \in Z $. Let us consider
$$ T : Z \to \ell_{2}, \quad z \mapsto \Big( \frac{1}{2^{(n+2)/2}} z^{*}_{n}(z) \Big)_{n=1}^{\infty}. $$
Then
$$ \sigma^{X}(z) = \Vert Tz \Vert, \quad z \in Z. $$
At the same time, $ T $ is injective (if $ Tz = 0 $, then $ Q^{X}_{n}z - Q^{X}_{n-1}z = 0 $ for all $ n $, and so $ Q^{X}_{n}z = 0 $ for all $ n $). Therefore, (i) follows from strict convexity of $ \ell_{2} $.

(ii) Using (i) and a property of $ \varrho $,
\begin{align*}
\opnorm{u+v}^{X} & = \varrho \big( \Vert u+v \Vert, \Vert u+v \Vert^{X}, \sigma^{X}(u+v) \big) \\
 & \leq \varrho \big( \Vert u \Vert + \Vert v \Vert, \Vert u \Vert^{X} + \Vert v \Vert^{X}, \sigma^{X}(u) + \sigma^{X}(v) \big) \\
 & \leq \varrho \big( \Vert u \Vert, \Vert u \Vert^{X}, \sigma^{X}(u) \big) + \varrho \big( \Vert v \Vert, \Vert v \Vert^{X}, \sigma^{X}(v) \big) \\
 & = \opnorm{u}^{X} + \opnorm{v}^{X}.
\end{align*}
The verification of $ \opnorm{\lambda z}^{X} = |\lambda| \opnorm{z}^{X} $ is similar.

(iii) We have
$$ \sigma^{X}(z)^{2} = \sum_{n=1}^{\infty} \frac{1}{2^{n+2}} \big\Vert Q^{X}_{n}z - Q^{X}_{n-1}z \big\Vert^{2} \leq \sum_{n=1}^{\infty} \frac{1}{2^{n+2}} \cdot (2 \Vert z \Vert)^{2} = \Vert z \Vert^{2}. $$

(iv) Using (I), (iii) and a property of $ \varrho $, we obtain
\begin{align*}
\Vert z \Vert & \leq \frac{1}{2} \big( \Vert z \Vert + \Vert z \Vert^{X} \big) \leq \varrho \big( \Vert z \Vert, \Vert z \Vert^{X}, \sigma^{X}(z) \big) \\
 & \leq \max \big\{ \Vert z \Vert, \Vert z \Vert^{X}, \sigma^{X}(z) \big\} = \Vert z \Vert^{X} \leq 2\Vert z \Vert.
\end{align*}

(v) We have
$$ \sigma^{X}(Q^{X}_{m}z) = \Big( \sum_{n=1}^{m} \frac{1}{2^{n+2}} \big\Vert Q^{X}_{n}z - Q^{X}_{n-1}z \big\Vert^{2} \Big)^{1/2} \leq \sigma^{X}(z). $$

(vi) Using (v) and a property of $ \varrho $, we obtain
\begin{align*}
\opnorm{Q^{X}_{n}z}^{X} & = \varrho \big( \Vert Q^{X}_{n}z \Vert, \Vert Q^{X}_{n}z \Vert^{X}, \sigma^{X}(Q^{X}_{n}z) \big) \\
 & \leq \varrho \big( \Vert z \Vert, \Vert z \Vert^{X}, \sigma^{X}(z) \big) = \opnorm{z}^{X}.
\end{align*}

(vii) This follows from (IV) and the definition of $ \sigma^{X} $.

(viii) This follows from (V), (vii) and the definition of $ \opnorm{\cdot}^{X} $.
\end{proof}

\begin{claim} \label{claim31}
We have $ \opnorm{Jx}^{X} = \Vert x \Vert $ for $ x \in X \in \mathcal{SE}(C([0, 1])) $. In particular, $ (Z, \opnorm{\cdot}^{X}) $ contains an isometric copy of $ X $.
\end{claim}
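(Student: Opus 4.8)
The plan is to reduce the claim to the squeeze inequality $\frac{1}{2}(|r|+|s|) \leq \varrho(r,s,t) \leq \max\{|r|,|s|,|t|\}$ enjoyed by $\varrho$, evaluated at the triple $(r,s,t) = (\Vert x\Vert, \Vert x\Vert, \sigma^{X}(Jx))$. Everything else is bookkeeping with the properties already established.

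First I would observe that, since $x \in X$, the vector $Jx$ lies in $JX$, so property (II) of Lemma~\ref{lemma2} gives $\Vert Jx\Vert^{X} = \Vert Jx\Vert$; and because $J = UI$ is an isometry, $\Vert Jx\Vert = \Vert x\Vert$. Hence the defining formula for the new norm reads
$$ \opnorm{Jx}^{X} = \varrho\big(\Vert x\Vert, \Vert x\Vert, \sigma^{X}(Jx)\big). $$
Next I would invoke Claim~\ref{multiclaim}(iii), which yields $\sigma^{X}(Jx) \leq \Vert Jx\Vert = \Vert x\Vert$. Feeding this into the two bounds on $\varrho$ with $r = s = \Vert x\Vert$ and $0 \leq t := \sigma^{X}(Jx) \leq \Vert x\Vert$: the lower bound gives $\varrho(\Vert x\Vert,\Vert x\Vert,t) \geq \tfrac{1}{2}(\Vert x\Vert + \Vert x\Vert) = \Vert x\Vert$, while the upper bound gives $\varrho(\Vert x\Vert,\Vert x\Vert,t) \leq \max\{\Vert x\Vert,\Vert x\Vert,t\} = \Vert x\Vert$. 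Combining the two inequalities gives $\opnorm{Jx}^{X} = \Vert x\Vert$.

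For the final assertion, note that $J$ is linear with $J(X) \subseteq Z$, so the identity just proved shows that $x \mapsto Jx$ is a linear isometry from $(X, \Vert\cdot\Vert)$ into $(Z, \opnorm{\cdot}^{X})$; hence the latter space contains an isometric copy of $X$. There is essentially no obstacle in this claim: the only point worth flagging is that the estimate $\sigma^{X}(Jx) \leq \Vert x\Vert$ (rather than merely $\leq 2\Vert x\Vert$) is exactly what forces the maximum to collapse to $\Vert x\Vert$. This argument is the specialization of the inequality chain in Claim~\ref{multiclaim}(iv) to vectors of $JX$, where $\Vert z\Vert^{X}$ may be replaced by $\Vert z\Vert$.
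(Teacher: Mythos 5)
Your proof is correct and essentially the same as the paper's: the paper also combines property (II), the bound $\sigma^{X}(Jx) \leq \Vert Jx \Vert$ from Claim~\ref{multiclaim}(iii), and the first listed property of $\varrho$ (the paper quotes it via the segment $[(1,1,-1),(1,1,1)]$ in the unit sphere, which is just the stated consequence of the same squeeze inequality you apply directly, after normalizing $\Vert x \Vert = 1$). No gaps.
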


\begin{proof}
Let us assume that $ \Vert x \Vert = 1 $. Note that $ \Vert Jx \Vert^{X} = \Vert Jx \Vert = \Vert x \Vert = 1 $ due to (II). At the same time, $ \sigma^{X}(Jx) \leq \Vert Jx \Vert = 1 $ due to Claim~\ref{multiclaim}(iii). Since the unit sphere $ S_{(\mathbb{R}^{3}, \varrho)} $ contains the line segment $ [(1,1,-1),(1,1,1)] $, we obtain $ \opnorm{Jx}^{X} = 1 = \Vert x \Vert $.
\end{proof}

\begin{claim} \label{claim32}
Let $ X \in \mathcal{SE}(C([0, 1])) $ and let $ [u, v] $ be a non-degenerate line segment in $ Z $ such that $ \opnorm{\cdot}^{X} $ is constant on $ [u, v] $. Then the segment $ [u, v] $ is contained in $ JX $.
\end{claim}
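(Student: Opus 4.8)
The plan is to exploit the strict convexity of the seminorm $\sigma^X$ (Claim~\ref{multiclaim}(i)) together with the third property of $\varrho$ (the one asserting $\varrho(r,s,t')>\varrho(r,s,t)$ whenever $0<r<s$ and $0<t<t'$) to force the segment into the ``equality set'' $\{z : \Vert z\Vert = \Vert z\Vert^X\}$, which by property~(II) is exactly $JX$. First I would normalise so that $\opnorm{u}^X = \opnorm{v}^X = 1$, and I would observe (using convexity of all three ingredient functions and the monotonicity property of $\varrho$) that each of the three functions $t\mapsto\Vert\cdot\Vert$, $t\mapsto\Vert\cdot\Vert^X$, $t\mapsto\sigma^X(\cdot)$ is constant along $[u,v]$; indeed, if any one of them were strictly convex-ish (strictly smaller at an interior point than the average of the endpoints) while $\varrho$ is nondecreasing in each coordinate, then $\opnorm{\cdot}^X$ would dip below $1$ at that interior point, contradicting constancy. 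Wait --- more carefully: $\varrho$ is only nondecreasing, not strictly increasing in general, so I must argue that the triple $(\Vert w\Vert,\Vert w\Vert^X,\sigma^X(w))$ lies on a face of $S_{(\mathbb R^3,\varrho)}$ on which $\varrho$ is affine, and then use that an affine function composed with convex functions being constant forces each convex function to be affine on $[u,v]$, hence (by strict convexity of $\sigma^X$) forces $\sigma^X$ to be \emph{linear} along $[u,v]$ in the strong sense that $w\mapsto Tw$ is affine and lies on a line through the origin direction --- actually the key consequence is that $\sigma^X$ restricted to $[u,v]$ has no strict convexity defect, so by Claim~\ref{multiclaim}(i) the points $Tu, Tv$ are positively proportional, i.e. $Tv = \lambda Tu$ for some $\lambda>0$.

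Next I would combine this with the constancy of $\Vert\cdot\Vert$ and $\Vert\cdot\Vert^X$ along $[u,v]$. The heart of the matter is the third property of $\varrho$: at an interior point $w$ of $[u,v]$ we have $\opnorm{w}^X=\varrho(\Vert w\Vert,\Vert w\Vert^X,\sigma^X(w))$, and if $\Vert w\Vert < \Vert w\Vert^X$ held with $\sigma^X(w)>0$, then strict convexity of $\sigma^X$ (unless $Tu,Tv$ proportional) or rather a small perturbation argument would let me increase the third coordinate strictly while keeping the first two fixed, contradicting that $w$ realises the value $1=\opnorm{\cdot}^X$ as a maximum/constant. The clean route: the third property of $\varrho$ says that on the region $\{0<r<s\}$ the function $\varrho$ is \emph{strictly} increasing in $t$; hence on that region $\varrho$ cannot be affine in any direction that moves $t$, so the constancy of $\opnorm{\cdot}^X$ on $[u,v]$ combined with the strict convexity of $\sigma^X$ forces either $\sigma^X$ to be affine along $[u,v]$ with $Tu\parallel Tv$, or the segment to lie in the complementary region $\{r\ge s\}\cup\{r=0\}$. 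Using $\Vert z\Vert\le\Vert z\Vert^X$ always (property~(I)), the region $\{r\ge s\}$ collapses to $\{r=s\}=\{z:\Vert z\Vert=\Vert z\Vert^X\}=JX$ by~(II), and the region $\{r=0\}$ means $u=v=0$, excluded since the segment is non-degenerate. So it remains only to rule out the first alternative, namely that $[u,v]\not\subset JX$ yet $\sigma^X$ is affine along $[u,v]$ with $Tu=\lambda Tv$: here I would note that $Tu\parallel Tv$ together with $T$ injective and $[u,v]$ non-degenerate forces $u$ and $v$ to be proportional as well, say $v=\mu u$; then $\Vert v\Vert=|\mu|\,\Vert u\Vert$ and constancy of $\Vert\cdot\Vert$ forces $|\mu|=1$, and non-degeneracy forces $\mu=-1$, i.e. $v=-u$ --- but then $\opnorm{u}^X=\opnorm{-u}^X$ is automatic and the midpoint is $0$, where $\opnorm{0}^X=0\neq 1$, contradicting constancy. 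Hence this alternative is impossible and $[u,v]\subset JX$.

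The main obstacle I anticipate is handling the weak monotonicity (as opposed to strict monotonicity) of $\varrho$ in its first two coordinates: one cannot directly conclude that $\Vert\cdot\Vert$ and $\Vert\cdot\Vert^X$ are individually constant on $[u,v]$ just from constancy of $\opnorm{\cdot}^X$, so the argument must route through the structure of the faces of $B_{(\mathbb R^3,\varrho)}$ and the \emph{strict} convexity of $\sigma^X$, which is the only ``rigid'' ingredient available. Concretely, the delicate step is showing that $\sigma^X$ is affine on $[u,v]$: this needs that if $\sigma^X$ had a strict convexity defect at the midpoint $w$, then since $\varrho$ is nondecreasing in $t$ and $\Vert\cdot\Vert,\Vert\cdot\Vert^X$ are (at worst) convex, $\opnorm{\cdot}^X$ would be strictly less than $1$ at $w$ --- but this requires $\varrho$ to be \emph{strictly} increasing in $t$ at the relevant point, which by the third property of $\varrho$ holds precisely when the first coordinate is strictly less than the second. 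So the two cases $\{r<s\}$ and $\{r=s\}$ genuinely must be separated, and in the case $\{r=s\}$ we are already inside $JX$ by~(II), while in the case $\{r<s\}$ the strict monotonicity kicks in and forces $Tu\parallel Tv$, which we then contradict as above. I would present this as a short case analysis after establishing, via convexity of $\varrho$ and of the three seminorms, that the whole segment $[u,v]$ lies in a single ``regime'' (either everywhere $\Vert z\Vert<\Vert z\Vert^X$ or everywhere $\Vert z\Vert=\Vert z\Vert^X$), which itself follows from the convexity of $z\mapsto\Vert z\Vert^X-\Vert z\Vert$ and an extreme-point / support-functional argument.
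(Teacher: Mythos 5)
Your central mechanism is the paper's: if an interior point $w$ of the segment lies outside $JX$, then (I) and (II) give $0<\Vert w\Vert<\Vert w\Vert^{X}$, strict convexity of $\sigma^{X}$ gives a strict convexity defect of $\sigma^{X}$ at $w$ relative to the endpoints, and then convexity and coordinatewise monotonicity of $\varrho$, combined with its strict monotonicity in the third coordinate on $\{0<r<s\}$, push $\opnorm{w}^{X}$ strictly below the constant value of $\opnorm{\cdot}^{X}$ on $[u,v]$ --- a contradiction. That is exactly the paper's computation, and you are even more careful than the paper in isolating the degenerate alternative where the defect fails, namely $Tu$ and $Tv$ nonnegatively proportional (the paper leaves this implicit).

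Two of your auxiliary steps, however, fail as written. First, the ``single regime'' reduction rests on the claim that $z\mapsto\Vert z\Vert^{X}-\Vert z\Vert$ is convex; a difference of norms is not convex in general and nothing in the construction provides this. The reduction is also unnecessary: since $JX$ is a closed subspace, it suffices to show that every interior point $w=(1-\theta)u+\theta v$ lies in $JX$, and the strict defect inequality $\sigma^{X}(w)<(1-\theta)\sigma^{X}(u)+\theta\sigma^{X}(v)$ is available at every interior point with respect to the original endpoints unless $Tu,Tv$ are nonnegatively proportional (equivalently, one can run the midpoint argument on arbitrary subsegments, which is how the paper phrases it); note also that $w\neq 0$, which gives the positivity of $\Vert w\Vert$ and of $\sigma^{X}(w)$ needed for the third property of $\varrho$. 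Second, your elimination of the proportional case invokes ``constancy of $\Vert\cdot\Vert$'' along $[u,v]$, which you never established (and which, as you yourself note, cannot be read off from constancy of $\opnorm{\cdot}^{X}$), and it then passes to $\mu=-1$, which is incompatible with the positive proportionality $Tv=\lambda Tu$, $\lambda>0$, that you had just derived. The correct and shorter route: injectivity of $T$ gives $v=\lambda u$ with $\lambda>0$; if $u=0$, constancy of $\opnorm{\cdot}^{X}$ forces $v=0$, contradicting non-degeneracy; otherwise homogeneity and constancy of $\opnorm{\cdot}^{X}$ force $\lambda=1$, i.e.\ $u=v$, again a contradiction. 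With these repairs your argument coincides with the paper's proof.
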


\begin{proof}
It is enough to show that $ w = \frac{1}{2}(u+v) \in JX $ (the argument can be repeated for any subsegment of $ [u, v] $). Assume the opposite, i.e., $ w \notin JX $. By Claim~\ref{multiclaim}(i),
$$ \sigma^{X}(w) < \frac{1}{2}\big( \sigma^{X}(u) + \sigma^{X}(v) \big). $$
At the same time, $ \Vert w \Vert < \Vert w \Vert^{X} $ by (I) and (II), and a property of $ \varrho $ provides
$$ \varrho\Big( \Vert w \Vert, \Vert w \Vert^{X}, \frac{1}{2}\big( \sigma^{X}(u) + \sigma^{X}(v) \big) \Big) > \varrho \big( \Vert w \Vert, \Vert w \Vert^{X}, \sigma^{X}(w) \big) = \opnorm{w}^{X}. $$
The computation
\begin{align*}
\frac{1}{2}\big( & \opnorm{u}^{X} + \opnorm{v}^{X} \big) \\
 & = \frac{1}{2} \Big( \varrho \big( \Vert u \Vert, \Vert u \Vert^{X}, \sigma^{X}(u) \big) + \varrho \big( \Vert v \Vert, \Vert v \Vert^{X}, \sigma^{X}(v) \big) \Big) \\
 & \geq \varrho \Big( \frac{1}{2} \big( \Vert u \Vert + \Vert v \Vert \big), \frac{1}{2} \big( \Vert u \Vert^{X} + \Vert v \Vert^{X} \big), \frac{1}{2}\big( \sigma^{X}(u) + \sigma^{X}(v) \big) \Big) \\
 & \geq \varrho\Big( \Vert w \Vert, \Vert w \Vert^{X}, \frac{1}{2}\big( \sigma^{X}(u) + \sigma^{X}(v) \big) \Big) > \opnorm{w}^{X} 
\end{align*}
concludes the proof.
\end{proof}

\begin{claim} \label{claim33}
{\rm (1)} $ (Z, \opnorm{\cdot}^{X}) $ is isometrically universal for all separable Banach spaces if and only if $ X $ has the same property.

{\rm (2)} $ (Z, \opnorm{\cdot}^{X}) $ is strictly convex if and only if $ X $ is strictly convex.
\end{claim}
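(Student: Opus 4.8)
The plan is to prove both statements simultaneously by exploiting the two facts already at hand: that $(Z,\opnorm{\cdot}^X)$ contains an isometric copy of $X$ (Claim~\ref{claim31}), so one implication in each item is immediate; and that, conversely, the only ``new'' line segments that the passage to $\opnorm{\cdot}^X$ can introduce into the unit sphere lie inside $JX$ (Claim~\ref{claim32}). The first of these takes care of the ``if'' direction of (1) and the ``only if'' direction of (2) with essentially no work: if $X$ is isometrically universal then so is any space containing an isometric copy of it, and if $(Z,\opnorm{\cdot}^X)$ is strictly convex then so is its subspace $(JX,\opnorm{\cdot}^X)$, which is isometric to $X$ by Claim~\ref{claim31}.

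For the remaining directions I would argue as follows. For (2), suppose $X$ is strictly convex; I want to show $(Z,\opnorm{\cdot}^X)$ is strictly convex, i.e. its unit sphere contains no non-degenerate segment. If $[u,v]$ were such a segment, then $\opnorm{\cdot}^X$ is constant on $[u,v]$, so by Claim~\ref{claim32} the whole segment lies in $JX$; but on $JX$ the norm $\opnorm{\cdot}^X$ agrees with the norm of $X$ transported via $J$ (Claim~\ref{claim31}), and since $X$ is strictly convex this transported norm admits no non-degenerate segment in its unit sphere — contradiction. Hence $(Z,\opnorm{\cdot}^X)$ is strictly convex.

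For the ``only if'' direction of (1): suppose $(Z,\opnorm{\cdot}^X)$ is isometrically universal for all separable Banach spaces. I must deduce that $X$ itself is. The natural route is to show that $(Z,\opnorm{\cdot}^X)$ embeds isometrically into $X$ — or, more precisely, into a space built canonically from $X$ without enlarging the isometry class. Here one should recall the general mechanism behind the whole construction: $Z=\ell_2(C([0,1]))$ with the norm $\opnorm{\cdot}^X$ is designed so that, whenever $X$ already contains isometrically some fixed separable space $H$ with the segment-absorbing property described in the introduction (here $H=c_0$ or $\ell_1$, and in fact $H=C(2^{\mathbb N})$ itself), the space $(Z,\opnorm{\cdot}^X)$ embeds isometrically into $X$; this is exactly the point of \cite[Theorem~1.2]{kurka1}, whose hypotheses are met by the present norm because of properties (I)--(III) of Lemma~\ref{lemma2} together with Claim~\ref{multiclaim}. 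Concretely, I would invoke that every $(Z,\opnorm{\cdot}^X)$ with a monotone basis and with $JX$ isometrically embedded carries the structure needed to re-embed it into $X$ as soon as $X$ is isometrically universal; so if $(Z,\opnorm{\cdot}^X)$ is isometrically universal, then $X$ contains isometrically $C(2^{\mathbb N})$, hence $X$ is isometrically universal too.

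The main obstacle will be making this last step rigorous without circularity: one wants ``$(Z,\opnorm{\cdot}^X)$ universal $\Rightarrow$ $X$ universal'' but the only obvious containment runs the other way ($X\hookrightarrow Z$). The clean way out is to observe that universality of $(Z,\opnorm{\cdot}^X)$ forces $X$ to contain an isometric copy of $C(2^{\mathbb N})$ directly: since $(Z,\opnorm{\cdot}^X)$ is universal it contains $C(2^{\mathbb N})$, and one shows that any isometric copy of $C(2^{\mathbb N})$ inside $(Z,\opnorm{\cdot}^X)$ must in fact lie in $JX$ — because $C(2^{\mathbb N})$ is generated by a point $a$ and a set $D$ with $\Vert a\pm d\Vert=\Vert a\Vert$, producing many non-degenerate segments in its unit sphere, all of which, by Claim~\ref{claim32}, are trapped in $JX$, and the linear span of such segments recovers all of $C(2^{\mathbb N})$. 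Thus $C(2^{\mathbb N})$ embeds isometrically into $JX\cong X$, so $X$ is isometrically universal. I would write the proof in this order: first dispatch the two easy directions via Claim~\ref{claim31}; then (2) via Claim~\ref{claim32} plus strict convexity of $X$; then the hard direction of (1) via the segment-trapping argument just sketched.
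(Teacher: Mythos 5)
Your proposal is correct and, in its final form, is essentially the paper's own proof: the two easy directions and the strict convexity of $(Z,\opnorm{\cdot}^{X})$ for strictly convex $X$ are obtained from Claims~\ref{claim31} and~\ref{claim32} exactly as in the paper, and for the hard direction of (1) the paper argues just as in your ``clean way out'': it takes an isometry $I\colon C(\Delta)\to (Z,\opnorm{\cdot}^{X})$ with $\Delta=\{0,1\}^{\mathbb{N}}$, puts $a=\mathbf{1}_{\Delta(0)}$ and lets $D$ be the unit ball of the subspace $H(1)$ of functions supported on $\Delta(1)$, uses $\Vert a\pm h\Vert=\Vert a\Vert=1$ and Claim~\ref{claim32} to trap $I(D)$ (hence $I(H(1))$) inside $JX$, and concludes because $H(1)\cong C(2^{\mathbb{N}})$ is isometrically universal. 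Two blemishes should be removed when writing this up: drop the middle paragraph invoking \cite[Theorem~1.2]{kurka1} to claim that $(Z,\opnorm{\cdot}^{X})$ re-embeds into $X$ --- that theorem asserts nothing of the kind, and your final argument does not need it; and weaken the assertion that ``the linear span of such segments recovers all of $C(2^{\mathbb{N}})$'', which is false for the natural choice of $a$ and $D$ (the span is only $\mathbb{R}a+H(1)$), to what you actually need and have, namely that the closed span of $D$ already contains an isometric copy of $C(2^{\mathbb{N}})$. Finally, make explicit the small step that $\opnorm{\cdot}^{X}$ is constant on the whole segment $[I(a-d),I(a+d)]$ (its endpoints and midpoint all have norm one, so convexity forces constancy), since that is the hypothesis of Claim~\ref{claim32}.
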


\begin{proof}
We check only the implication \textquotedblleft $ \Rightarrow $\textquotedblright {} in (1), since other implications follow from Claims \ref{claim31} and \ref{claim32}. Let us denote
$$ \Delta = \{ 0, 1 \}^{\mathbb{N}}, \quad \Delta(i) = \{ \gamma \in \Delta : \gamma(1) = i \}, \quad i = 0, 1, $$
$$ H = C(\Delta), \quad H(i) = \{ h \in H : \gamma \notin \Delta(i) \Rightarrow h(\gamma) = 0 \}, \quad i = 0, 1. $$
Assume that there is an isometry $ I : H \to (Z, \opnorm{\cdot}^{X}) $ and denote
$$ z = I (\mathbf{1}_{\Delta(0)}). $$
We claim that the space $ JX $ (and therefore the space $ X $ by Claim~\ref{claim31}) is universal, showing that $ I $ maps $ H(1) $ into $ JX $.

Given $ h \in H(1) $ such that $ \Vert h \Vert \leq 1 $, we observe that $ \Vert \mathbf{1}_{\Delta(0)} \Vert = \Vert \mathbf{1}_{\Delta(0)} \pm h \Vert = 1 $, and so $ \opnorm{z}^{X} = \opnorm{z \pm Ih}^{X} = 1 $. By Claim~\ref{claim32}, we have $ Ih \in JX $.
\end{proof}

Our last claim is a statement similar to \cite[Theorem~17]{dodos} and \cite[Theorem~5.19]{dodostopics}.

\begin{claim} \label{claim34}
The set
$$ \mathcal{R} = \big\{ (X, Y) \in \mathcal{SE}(C([0, 1]))^{2} : Y \textrm{ is isometric to } (Z, \opnorm{\cdot}^{X}) \big\} $$
is analytic.
\end{claim}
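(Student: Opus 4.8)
The plan is to exhibit $\mathcal{R}$ as the projection of a Borel (in fact, $\sigma$-compact-sectioned) set, namely the set of triples $(X, Y, \mathcal{I})$ where $\mathcal{I}$ codes a linear surjective isometry from $(Z, \opnorm{\cdot}^{X})$ onto $Y$. The natural way to code such an isometry in a Borel fashion is to work with a fixed dense sequence in $Z$. Fix Borel maps $z_1, z_2, \dots : \mathcal{SE}(C([0,1])) \to Z$ that are actually constant (a fixed dense $\mathbb{Q}$-linear subset of the separable space $Z$ does the job, since $Z$ does not depend on $X$), and fix Borel selectors $d_1, d_2, \dots : \mathcal{F}(C([0,1])) \to C([0,1])$ from Theorem~\ref{kurryl}, so that $d_1(Y), d_2(Y), \dots$ is dense in $Y$. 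A linear isometry $T : (Z, \opnorm{\cdot}^{X}) \to Y$ is determined by the images $T z_k$, and these images can be taken to lie in the dense set $\{d_j(Y) : j \in \mathbb{N}\}$ up to arbitrarily small error; so $\mathcal{I}$ will range over a Polish space of ``infinite matrices'', e.g. $\mathbb{N}^{\mathbb{N}}$ or $\ell_2$-valued codes, recording approximate values $T z_k$ as limits of $d_{j}(Y)$.

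The key step is then to write down, for a candidate code $\mathcal{I}$, the conditions that it really defines a surjective isometry:
\begin{itemize}
\item \emph{Linearity}: for all rationals $\alpha, \beta$ and all $k, l, m$ with $z_m = \alpha z_k + \beta z_l$, the corresponding matrix entries satisfy the same $\mathbb{Q}$-linear relation (a closed, hence Borel, condition on $\mathcal{I}$ and $Y$, using that $(j, Y) \mapsto d_j(Y)$ is Borel and the norm on $Y$ is continuous).
\item \emph{Isometry}: $\Vert T z_k \Vert_{Y} = \opnorm{z_k}^{X}$ for every $k$; here the left side is Borel in $(Y, \mathcal{I})$ as above, and the right side is Borel in $(X, z_k)$ by Claim~\ref{multiclaim}(viii), so this is a Borel condition on $(X, Y, \mathcal{I})$.
\item \emph{Density of the range}: every $d_i(Y)$ is a norm-limit of $\mathbb{Q}$-linear combinations of the $T z_k$, i.e. $\forall i \, \forall \varepsilon \, \exists$ a rational combination within $\varepsilon$ of $d_i(Y)$ — again a Borel condition.
\end{itemize}
Conversely, any $(X, Y)$ with $Y$ isometric to $(Z, \opnorm{\cdot}^{X})$ admits such a code, because one can approximate the images of the countable dense set $\{z_k\}$ by points of the dense set $\{d_j(Y)\}$; the resulting $T$ extends to a surjective linear isometry by completeness. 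Hence $\mathcal{R}$ is exactly the projection onto the first two coordinates of the Borel set of admissible triples $(X, Y, \mathcal{I})$, and therefore analytic.

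The main obstacle I expect is the bookkeeping in the density-of-range clause and the verification that an approximate code genuinely extends to a \emph{surjective} isometry: one must be careful that the approximations $T z_k \approx d_{j(k)}(Y)$ are taken consistently enough (e.g. with geometrically decaying errors along a cofinal system of $\mathbb{Q}$-linear relations) that the limit map is well-defined, linear, isometric, and has dense — hence, by the open mapping theorem for isometries, all of — $Y$ as its range. Once the coding space and these three Borel conditions are set up correctly, invoking that a continuous image (here, a coordinate projection) of a Borel, indeed analytic, set is analytic finishes the proof; none of the individual Borelness verifications requires anything beyond Claim~\ref{multiclaim}(viii), Theorem~\ref{kurryl}, and continuity of the norm evaluations.
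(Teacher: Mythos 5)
Your plan is correct in substance and follows the same basic strategy as the paper: exhibit $\mathcal{R}$ as the projection of a Borel subset of $\mathcal{SE}(C([0,1]))^{2}\times(\text{a Polish space of witnesses})$, using the Kuratowski--Ryll-Nardzewski selectors $d_{n}$ for $Y$ and the Borelness of $(X,z)\mapsto\opnorm{z}^{X}$ from Claim~\ref{multiclaim}(viii). The difference is in the direction of the coding, and it matters for how much bookkeeping you incur. You fix a dense ($\mathbb{Q}$-linear) sequence in $Z$ and code its images in $Y$ by approximating them with the $d_{j}(Y)$, which forces you to handle three separate clauses (approximate linearity, isometry, density of range) and the consistency-of-approximations argument you flag as the main obstacle; this can be made to work (or simplified by letting the witness images be actual elements of $C([0,1])^{\mathbb{N}}$, since the membership relation $y\in Y$ is Borel), but exact $\mathbb{Q}$-linear relations among indices into $\{d_{j}(Y)\}$ cannot be imposed literally, so the limiting/error-control details are genuinely needed in your version. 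The paper reverses the roles: the existential witness is a sequence $(z_{1},z_{2},\dots)\in Z^{\mathbb{N}}$, intended as the preimages $T^{-1}d_{n}(Y)$ of the selectors, and the only conditions are that $(z_{n})$ is dense in $Z$ and that $\opnorm{\sum_{n\le m}\gamma_{n}z_{n}}^{X}=\Vert\sum_{n\le m}\gamma_{n}d_{n}(Y)\Vert$ for all rational combinations; well-definedness, linearity, isometry and surjectivity of the induced map then come for free (the rational-span map extends by density and equivalence of $\opnorm{\cdot}^{X}$ with $\Vert\cdot\Vert$, and its range is closed and contains the dense set $\{d_{n}(Y)\}$), with no approximation scheme at all. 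So your approach buys nothing extra here and costs the $\varepsilon$-management; if you carry it out, do spell out the decaying-error clauses, since as literally stated your linearity condition is not expressible for indices into $\{d_{j}(Y)\}$.
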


\begin{proof}
Let $ s_{1}, s_{2}, \dots $ be a dense sequence in $ Z $. Let us recall that the function $ (X, z) \mapsto \opnorm{z}^{X} $ is Borel by Claim~\ref{multiclaim}(viii). Therefore, the set $ \mathcal{R} $ is a projection of a Borel set in $ \mathcal{SE}(C([0, 1]))^{2} \times Z^{\mathbb{N}} $, as
\begin{eqnarray*}
(X, Y) \in \mathcal{R} & \Leftrightarrow & \exists (z_{1}, z_{2}, \dots ) \in Z^{\mathbb{N}} : \\
 & & \bigg[ \Big( \forall k \in \mathbb{N} \; \forall l \in \mathbb{N} \; \exists n \in \mathbb{N} : \Vert s_{k} - z_{n} \Vert < \frac{1}{l} \Big) \\
 & & \& \; \Big( \forall m \in \mathbb{N} \; \forall \gamma_{1}, \dots, \gamma_{m} \in \mathbb{Q} : \\
 & & \opnorm[\Big]{\sum_{n=1}^{m} \gamma_{n} z_{n}}^{X} = \Big\Vert \sum_{n=1}^{m} \gamma_{n} d_{n}(Y) \Big\Vert \Big) \bigg]
\end{eqnarray*}
where a sequence of mappings $ d_{1}, d_{2}, \dots : \mathcal{F}(C([0, 1])) \to C([0, 1]) $ is provided by Theorem~\ref{kurryl}.
\end{proof}

Let us finish the proof of Theorems \ref{thmmain1} and \ref{thmmain2}. Depending on the theorem we want to prove, let $ P $ denote the property of being not isometrically universal for all separable Banach spaces or the property of being strictly convex.

By \cite[Theorem 1.2]{kurka1}, the theorems have been already proved under the assumption that the members of $ \mathcal{C} $ have a monotone basis. Therefore, it is sufficient to show the following.

\emph{Let $ \mathcal{C} $ be an analytic set of separable Banach spaces which satisfy $ P $. Then there exists an analytic set $ \mathcal{C}' $ of Banach spaces which satisfy $ P $ such that every member of $ \mathcal{C'} $ has a monotone basis and such that an isometric copy of every member of $ \mathcal{C} $ is contained in a member of $ \mathcal{C'} $.}

Given such $ \mathcal{C} $, the set
$$ \mathcal{C}' = \big\{ Y \in \mathcal{SE}(C([0, 1])) : Y \textrm{ isometric to } (Z, \opnorm{\cdot}^{X}) \textrm{ for some } X \in \mathcal{C} \big\} $$
is analytic by Claim~\ref{claim34}, since it is a projection of the analytic set $ \mathcal{R} \cap (\mathcal{C} \times \mathcal{SE}(C([0, 1]))) $.

Let us check that $ \mathcal{C}' $ works. By Claim~\ref{claim33}, every $ Y \in \mathcal{C}' $ satisfies $ P $. By Claim~\ref{multiclaim}(vi), every $ Y \in \mathcal{C}' $ has a monotone basis. Finally, every $ X \in \mathcal{C} $ is contained in some $ Y \in \mathcal{C}' $ by Claim~\ref{claim31}.

\end{document}